\newcommand{\R}{\mathbb{R}}
\newcommand{\N}{\mathbb{N}}
\newcommand{\D}{\mathbb{D}}
\newcommand{\s}{\mathbb{S}}
\newcommand{\eps}{\varepsilon}
\newtheorem{theoreme}{Theorem}[section]
\newtheorem{corollaire}[theoreme]{Corollary}
\newtheorem{proposition}[theoreme]{Proposition}
\newtheorem{definition}[theoreme]{Definition}
\newtheorem{lemme}[theoreme]{Lemma}
\newtheorem{exemple}[theoreme]{Example}
\newtheorem{remarque}[theoreme]{Remark}
\title{Tubular excision and Steklov eigenvalues}
\author{Jade Brisson}
\address{D\'epartement de math\'ematiques et de statistique, Pavillon Alexandre-Vachon, Universit\'e Laval, Qu\'ebec, QC, G1V 0A6, Canada}
\email{jade.brisson.1@ulaval.ca}
\begin{document}
\begin{abstract}
  Given a closed manifold $M$ and a closed connected submanifold $N\subset M$ of positive codimension, we study the Steklov spectrum of the domain $\Omega_\eps\subset M$ obtained by removing the tubular neighbourhood of size $\eps$ around $N$. All non-zero eigenvalues in the mid-frequency range tend to infinity at a rate which depends only on the codimension of $N$ in $M$. Eigenvalues above the mid-frequency range are also described: they tend to infinity following an unbounded sequence of clusters. This construction is then applied to obtain manifolds with unbounded perimeter-normalized spectral gap and to show the necessity of using the injectivity radius in some known isoperimetric-type upper bounds.
\end{abstract}

\maketitle

\section{Introduction}

Let $(\Omega,g)$ be a smooth compact connected Riemannian manifold of dimension $m\geq2$, with boundary $\partial\Omega$.
A real number $\sigma\in\R$ is called a Steklov eigenvalue if there exists a nonzero function $f\in C^\infty(\Omega)$ such that
\begin{equation*}
\begin{cases}
    \Delta f=0 &\mbox{ in }\Omega,\\
    \partial_n f=\sigma f &\mbox{ on }\partial\Omega.
\end{cases}
\end{equation*}
Here and elsewhere $\Delta=\Delta_g:C^\infty(\Omega)\to C^\infty(\Omega)$ is the Laplace operator induced by the Riemannian metric $g$, and $\partial_n$ denotes the outward-pointing normal derivative on $\partial\Omega$.  It is well known that the Steklov eigenvalues of $\Omega$ form a sequence
\[0=\sigma_0(\Omega,g)<\sigma_1(\Omega,g)\leq\sigma_2(\Omega,g)\leq\ldots\nearrow +\infty\,,\]
where each eigenvalue is repeated according to its multiplicity. This sequence is known as the Steklov spectrum of $(\Omega,g)$ and will be denoted $\mathcal{S}(\Omega)$. The question to link the Steklov eigenvalues of the manifold $\Omega$ to its geometry is an active research topic in spectral geometry. In particular, several authors have proved upper bounds for the Steklov eigenvalues under various geometric constraints. See~\cite{girouard2020large,colbois2020upper,colbois2020sharp, HenrotDiam}. In parallel, it is interesting to construct various examples of manifolds that have large first nonzero Steklov eigenvalue $\sigma_1$, as this can be used to study the relevance of various upper bounds. See~\cite{CiG,CGR,ColbGirGraphSurface}. The current paper provides a novel way to obtain perimeter-normalized manifolds with large spectral gap $\sigma_1>0$, with a particularly simple geometry that is obtained by removing thin tubular neighborhoods of closed manifolds of positive codimension. 

\subsection{Tubular excision of closed Riemannian manifolds}
Let $M$ be a smooth compact Riemannian manifold of dimension $m$, without boundary. Given a closed submanifold $N\subset M$ of positive codimension $m-n$, consider the tubular neighbourhoods $T_\eps=\{x\in M\,:\,d(x,N)<\eps\}$, where $d=d_g$ is the Riemannian distance. We study the Steklov eigenvalues of the domains
\begin{gather}\label{def:perforateDom}
  \Omega_\eps:=M\setminus T_\eps=\{x\in M\,:\,d(x,N)\geq\eps\},
\end{gather}
obtained by excision of the tubular neighbourhood $T_\eps$.
The main result of this paper is a description of the asymptotic  behaviour of the Steklov eigenvalues of these domains as $\eps\to 0$. 
This description involves the Laplace eigenvalues of the unit $d-$dimensional sphere, for $d=m-n-1$. The distinct Laplace eigenvalues $\{\mu_{(j)}\}$ are given by $\mu_{(j)}=j(j+d-1)$. The multiplicity of $\mu_{(j)}$ is $m_j=\binom{d+j}{d}-\binom{d+j-2}{d}$.

\begin{theoreme}\label{thm:PrincipalIntro}
Let $M$ be a compact Riemannian manifold of dimension $m\geq 3$ and let $N\subset M$ be a closed connected submanifold of dimension $0<n\leq m-2$.
Then, for all $k\,,\ell\geq 0$, there are numbers $\sigma_{k,\ell}(\eps)\geq 0$ such that the Steklov spectrum of $\Omega_\eps$ is given by the multi-set
$$\mathcal{S}(\eps)=\{\sigma_{k,\ell}(\eps)\,:\,\ell,k\geq 0\},$$
where $\sigma_0(\Omega_\eps)=\sigma_{0,0}(\eps)\equiv 0$. For $\ell=0$, set $j=0$ and for $\ell>0$, choose the unique $j>0$ such that $m_0+\cdots+m_{j-1}\leq \ell< m_0+\cdots+m_{j-1}+m_j$. In each of these cases the following limits hold, for each $k\geq0$,
\begin{gather}\label{eq:mainresultIntro}
  \lim\limits_{\varepsilon\to0}\varepsilon\sigma_{k,\ell}(\Omega_\varepsilon)=m-n-2+j\,.
\end{gather}

In particular, for $n=m-2$ and $\ell=0$, this limit is 0. In that case, the following improvement holds for each $k>0$,
\begin{gather}\label{eq:logimprovement}
  \lim\limits_{\varepsilon\to0}\varepsilon|\log\varepsilon|\sigma_{k,0}(\Omega_\varepsilon)=1\,.
\end{gather}
\end{theoreme}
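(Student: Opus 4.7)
My plan is to reduce the eigenvalue problem on $\Omega_\eps$ to a model problem in the normal bundle of $N$, where separation of variables yields the claimed limits explicitly, and then to match this local analysis against the full spectrum via a quasimode/DtN argument. The first step is to introduce Fermi coordinates $(y,t,\theta)\in N\times(\eps,r_0)\times\s^{m-n-1}$ in a tubular neighborhood of $N$, with $t=d_g(\cdot,N)$. The metric there has the form $g=dt^2+t^2 g_{\s^{m-n-1}}+g_N+O(t)$, and consequently
\[
\Delta_g=\partial_t^2+\frac{m-n-1}{t}\partial_t+\frac{1}{t^2}\Delta_{\s^{m-n-1}}+\Delta_N+O(1),
\]
the $O(1)$ remainder absorbing the second fundamental form of $N$ and the ambient curvature. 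The boundary $\partial\Omega_\eps$ is approximately the $\eps$-sphere bundle over $N$, so $L^2(\partial\Omega_\eps)\cong\bigoplus_{j\geq 0}L^2(N)\otimes H_j$, where $H_j$ denotes the fiberwise spherical harmonics of degree $j$. This is the decomposition in which one expects the Dirichlet-to-Neumann operator $D_\eps$ to be approximately diagonal.

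For the upper bound, construct quasimodes: for each index $(k,j)$, let $\phi_k$ be a Laplace eigenfunction on $N$ and $Y_j\in H_j$, and try a separated function $u(y,t,\theta)=\chi(t)R_j(t)\phi_k(y)Y_j(\theta)$, where $\chi$ is a cutoff supported in $t\leq r_0$ and $R_j$ solves the leading-order Euler equation
\[
R_j''+\frac{m-n-1}{t}R_j'-\frac{j(j+m-n-2)}{t^2}R_j=0.
\]
Its decaying solution $R_j(t)=t^{-(j+m-n-2)}$ (valid whenever $j+m-n-2>0$) satisfies
\[
-\frac{R_j'(\eps)}{R_j(\eps)}=\frac{j+m-n-2}{\eps},
\]
matching the ratio $\partial_n u/u$ at the boundary and hence the limit in~\eqref{eq:mainresultIntro}. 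Inserting $u$ into the Steklov Rayleigh quotient and applying min-max gives $\sigma_{k,j}(\Omega_\eps)\leq(j+m-n-2)/\eps+o(1/\eps)$. In the critical case $n=m-2$, $j=0$, the Euler equation degenerates and one instead uses the logarithmic profile $R_0(t)=\log(1/t)$, producing $-R_0'(\eps)/R_0(\eps)=1/(\eps|\log\eps|)$, consistent with~\eqref{eq:logimprovement}.

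For the matching lower bound and to show that $\{\sigma_{k,j}(\eps)\}$ exhausts $\mathcal{S}(\eps)$, I would analyze $D_\eps$ block-wise in the tensor decomposition above. The Fermi-coordinate expansion of $\Delta_g$ suggests $D_\eps$ acts on the $(k,j)$-block as $(j+m-n-2)/\eps$ plus bounded corrections from curvature and the tangential Laplacian $\Delta_N$. Standard min-max interlacing then produces $\sigma_{k,j}(\Omega_\eps)\geq(j+m-n-2)/\eps+o(1/\eps)$ and simultaneously accounts for every Steklov eigenvalue by a dimension count block by block. The technical heart of the argument, and the step I expect to be the hardest, is controlling the curvature error terms: they must be shown to be $o(1/\eps)$ (and $o(1/(\eps|\log\eps|))$ in the critical case), which requires uniform estimates on harmonic extensions into $\Omega_\eps$ from boundary data concentrated in a given $(k,j)$-block. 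The explicit Euclidean-model computation gives the leading asymptotic cleanly; the perturbation analysis certifies that no spurious eigenvalues appear and that every Steklov eigenvalue is indeed captured by a quasimode.
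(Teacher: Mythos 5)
Your upper-bound half is sound and essentially parallels one side of the paper's argument: the separated profiles $t^{-(j+m-n-2)}$ (and $\log(1/t)$ in the critical case) are exactly the radial factors that appear in the paper's explicit computation, and plugging them into the Rayleigh quotient does give $\sigma\lesssim (m-n-2+j)/\eps$. But the lower bound and the exhaustion of the spectrum --- precisely the parts you flag as ``the technical heart'' --- are not proved; they are deferred to an unestablished claim that the Dirichlet-to-Neumann operator $D_\eps$ is approximately block-diagonal in the decomposition $\bigoplus_{j}L^2(N)\otimes H_j$ with errors $o(1/\eps)$ (and $o(1/(\eps|\log\eps|))$ when $n=m-2$, $j=0$). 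That claim is not standard and is where all the difficulty lives: you would need uniform control of harmonic extensions from boundary data in a fixed $(k,j)$-block, of the coupling between blocks induced by the $O(t)$ metric corrections, and of the influence of the bulk $M\setminus T_{\delta}$ on $D_\eps$, none of which is sketched. As written, the argument establishes only one-sided bounds on the ordered eigenvalues and does not justify the multi-set description $\mathcal{S}(\eps)=\{\sigma_{k,j}(\eps)\}$.

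The paper circumvents this entirely with two elementary tools. First, Dirichlet--Neumann bracketing on the annular collar $A=\{\eps<d(\cdot,N)<\delta\}$ gives $\sigma_i^N(A)\leq\sigma_i(\Omega_\eps)\leq\sigma_{i+1}^D(A)$, so the global geometry of $M$ and the behaviour of harmonic extensions outside the collar never enter. Second, instead of treating the curvature corrections perturbatively, it replaces $g$ on the collar by the exact product metric $h\oplus dr^2\oplus r^2g_0$, which is quasi-isometric to $g$ with constant $1+\eps_0$ \emph{independent of} $\eps$; Steklov-type eigenvalues change by at most a factor $(1+\eps_0)^{m+1/2}$ under such a quasi-isometry. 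Both mixed spectra on the product are then computed exactly by separation of variables (Bessel functions), both have the same leading asymptotics $(m-n-2+j)/\eps$, and the squeeze plus $\eps_0\to0$ yields the two-sided limit and the cluster structure simultaneously. If you want to complete your proof along your own lines, you must either carry out the DtN perturbation analysis in full or switch to the bracketing-plus-quasi-isometry route; without one of these, the lower bound in \eqref{eq:mainresultIntro} and \eqref{eq:logimprovement} is missing.
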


This theorem shows that in the limit $\eps\to 0$, the Steklov spectrum collapses to a sequence of infinite diverging clusters indexed by the parameter $j\geq 0$. It is remarkable that the geometry of the submanifold $N\subset M$ has no influence on this limit behaviour. Indeed the only remaining information related to $N$ is its codimension $m-n$.

\begin{remarque}
If $N$ is not connected, equation \eqref{eq:mainresultIntro} stays true for each eigenvalue whose index is larger than the number of boundary component of $\Omega_\eps$. Indeed, the proof works for those eigenvalues. The general case will be considered elsewhere.
\end{remarque}

In the limit $\eps\to 0$ the ordered eigenvalues $\sigma_k(\Omega_\eps)$ correspond to the smallest cluster, at $j=0$.
\begin{corollaire}\label{cor:introsigmak}
  Let $M$ be a smooth compact Riemannian manifold of dimension $m\geq 3$ and let $N\subset M$ be a smooth closed connected embedded submanifold of dimension $0<n\leq m-2$.
  Then for each $k\in\N$,
  \begin{gather}\label{eq:limitsigmakintro}
    \lim\limits_{\eps\to0}\eps\sigma_{k}(\Omega_\varepsilon)=m-n-2.
  \end{gather}
  Moreover, in the case where $n=m-2$, the following holds for each $k\in\N$,
  \[\lim\limits_{\eps\to0}\eps|\log\varepsilon|\sigma_{k}(\Omega_\varepsilon)=1\,.\]
\end{corollaire}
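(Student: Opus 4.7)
The plan is to deduce Corollary \ref{cor:introsigmak} from Theorem \ref{thm:PrincipalIntro} by identifying, for $\eps$ small enough, the first $K+1$ ordered Steklov eigenvalues of $\Omega_\eps$ with members of the lowest cluster $\{\sigma_{k,0}(\eps):k\geq 0\}$. The strategy has two halves: an upper bound by min-max using the eigenvalues of that cluster as test objects, and a lower bound based on the asymptotic separation between the cluster at $j=0$, whose rescaled limit is $m-n-2$, and the cluster at $j=1$, whose rescaled limit is $m-n-1$.

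For the upper bound, I fix $K\in\N$. Since $\mathcal{S}(\eps)=\{\sigma_{k,j}(\eps):k,j\geq 0\}$ is the full Steklov multi-spectrum, it contains in particular the $K+1$ elements $\sigma_{0,0}(\eps),\sigma_{1,0}(\eps),\ldots,\sigma_{K,0}(\eps)$. Therefore the $(K+1)$-th smallest element of $\mathcal{S}(\eps)$ satisfies
\[\sigma_K(\Omega_\eps)\leq\max_{0\leq k\leq K}\sigma_{k,0}(\eps).\]
Multiplying by $\eps$ and applying \eqref{eq:mainresultIntro} at each of the finitely many indices $k\leq K$ yields $\limsup_{\eps\to 0}\eps\sigma_K(\Omega_\eps)\leq m-n-2$.

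For the lower bound I plan to argue by counting: for every $\delta\in(0,1)$, at most $K$ elements of $\mathcal{S}(\eps)$ should satisfy $\eps\sigma_{k,j}(\eps)<m-n-2-\delta$ once $\eps$ is small. Pointwise in $(k,j)$ this is immediate from Theorem \ref{thm:PrincipalIntro}: any $\sigma_{k,j}(\eps)$ with $j\geq 1$ has rescaled limit $\geq m-n-1>m-n-2-\delta$, and any $\sigma_{k,0}(\eps)$ has rescaled limit $m-n-2>m-n-2-\delta$. The main obstacle I foresee is promoting this pointwise convergence to a statement uniform in $(k,j)$, since the counting argument requires only finitely many pairs to fall below the threshold simultaneously. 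I expect the required uniformity to come out of the proof of Theorem \ref{thm:PrincipalIntro}: the asymptotics for $\sigma_{k,j}(\eps)$ are presumably obtained by comparison with a limit model operator on the normal sphere bundle of $N$, with $j$ indexing fiber modes and $k$ a tangential mode, and such a model provides lower bounds on $\sigma_{k,j}(\eps)$ that are uniform in $k$ for each fixed $j$ and uniform in $j$ once $j$ is large, which together suffice.

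The critical codimension case $n=m-2$ requires no new ideas. The first assertion becomes $\eps\sigma_K(\Omega_\eps)\to 0$, for which the above upper bound is enough and the lower bound is automatic from $\sigma_K(\Omega_\eps)\geq 0$. The sharper statement uses \eqref{eq:logimprovement} in place of \eqref{eq:mainresultIntro}: the same min-max gives $\limsup\eps|\log\eps|\sigma_K(\Omega_\eps)\leq 1$, while the lower bound exploits that the higher clusters sit at the scale $j/\eps$, which is asymptotically much larger than $1/(\eps|\log\eps|)$, so that once more the $K+1$ smallest eigenvalues come from the $j=0$ cluster in the limit.
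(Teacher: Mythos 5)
Your upper bound is complete and is the intended argument: the multiset $\mathcal{S}(\eps)$ contains the $K+1$ elements $\sigma_{0,0}(\eps),\dots,\sigma_{K,0}(\eps)$, so the $(K+1)$-st smallest element satisfies $\sigma_K(\Omega_\eps)\le\max_{0\le k\le K}\sigma_{k,0}(\eps)$, and \eqref{eq:mainresultIntro} (resp.\ \eqref{eq:logimprovement}) gives the $\limsup$. The lower bound, however, contains a genuine gap which you correctly identify but do not close. What is needed is: for each $\delta>0$ and all sufficiently small $\eps$, at most $K$ elements of the \emph{infinite} multiset $\{\sigma_{k,j}(\eps)\}$ lie below $(m-n-2-\delta)/\eps$. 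The pointwise limits of Theorem~\ref{thm:PrincipalIntro} cannot deliver this: the statement of the theorem carries no uniformity in $(k,j)$, and a priori nothing prevents, for every $\eps$, some high modes $\sigma_{k(\eps),j(\eps)}(\eps)$ from dipping below the threshold. Writing that you ``expect the required uniformity to come out of the proof'' is naming the missing step, not supplying it; as it stands the lower bound (and hence also the log-refined lower bound when $n=m-2$) is unproven.

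The gap can be filled, but only by going behind Theorem~\ref{thm:PrincipalIntro} into the model computation. Use the bracketing for \emph{ordered} eigenvalues, $\sigma_K(\Omega_\eps,g)\ge\sigma_K^N(A,g)\ge(1+\eps_0)^{-(2m+1)}\sigma_K^N\big(N\times(\eps,\delta)\times\s^{m-n-1},\tilde g\big)$, and then order the explicit multiset $\{\sigma^N_{k,j}\}$ of Lemma~\ref{lemme:asymptotiqueneumann}. For each pair $(k,j)$ the radial problem has a single eigenvalue, equal to the infimum of the one-dimensional Rayleigh quotient $\int_\eps^\delta\big(R'^2+(\lambda_k+\mu_j r^{-2})R^2\big)r^{m-n-1}dr\,\big/\,\eps^{m-n-1}R(\eps)^2$, which is nondecreasing in $\lambda_k$ and in $\mu_j$. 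Hence $\sigma^N_{k,j}\ge\min(\sigma^N_{1,0},\sigma^N_{0,1})$ for every $(k,j)\neq(0,0)$, and the two pointwise asymptotics $\sigma^N_{1,0}\sim(m-n-2)/\eps$ (resp.\ $\sim 1/(\eps|\log\eps|)$ when $n=m-2$) and $\sigma^N_{0,1}\sim(m-n-1)/\eps$ show that only $\sigma^N_{0,0}=0$ lies below the threshold, so $\sigma_K^N$ of the model is asymptotically $(m-n-2)/\eps$ for every $K\ge1$. This monotonicity (or an equivalent uniform lower bound over the index set) is the ingredient your argument must state and prove.
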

The case where the submanifold is a point is excluded from Theorem \ref{thm:PrincipalIntro}. The limit behaviour in this case is given in the next result.

\begin{theoreme}\label{thm:pointIntro}
Let $M$ be a smooth compact Riemannian manifold of dimension $m\geq 2$ and $p\in M$. Then, if $j=0$, set $k=0$ and for $j>0$, choose the unique $k>0$ such $m_0+\cdots+m_{k-1}\leq j< m_0+\cdots+m_{k-1}+m_k$. The following limit holds
\[\lim\limits_{\varepsilon\to0}\eps\sigma_j(\Omega_\eps)=m+k-2\,,\]
where $\Omega_\eps:=M\setminus B(p,\eps)$.

\end{theoreme}

\begin{remarque}
For a compact submanifold of dimension $m-1$, the behavior of the Steklov eigenvalues is completely different.
An example is presented in Section \ref{section:main}.
\end{remarque}

\subsection{Application to isoperimetric type problem}
Given a complete Riemannian manifold $M$ of dimension $m\geq 2$, the question to find upper bounds for  $\sigma_1(\Omega)|\partial\Omega|^{1/(m-1)}$ among bounded domains $\Omega\subset M$ has a rich history. In the Euclidean space $M=\R^m$ this question is equivalent to the maximization of $\sigma_1$ among domains with prescribed boundary measure $|\partial\Omega|$. For $m=2$ the optimal upper bound is known thanks to~\cite{wein, kok,GirKarpLag}, while for $m\geq 3$ it is known that $\sigma_1(\Omega)|\partial\Omega|^{1/(m-1)}$ is bounded above~\cite{ceg2}, but the optimal bound remains unknown. For domains $\Omega$ in a compact manifold of dimension $m\geq 3$, the situation is completely different: it was proved in~\cite{girouard2020large} that in that case $\sigma_1(\Omega)|\partial\Omega|^{1/(m-1)}$ is not bounded above. The proof relies on an homogenization procedure, in which a domain $\Omega_\eps\subset M$ is obtained by removing an unbounded number of uniformly distributed small balls from the compact manifold. Theorem~\ref{thm:PrincipalIntro} leads to an alternative and simpler approach.
\begin{corollaire}\label{cor:valeurproprenormalisee}
  Let $M$ be a compact Riemannian manifold of dimension $m\geq 3$ and let $N\subset M$ be a closed connected submanifold of dimension $0<n\leq m-2$. Then the domains $\Omega_\eps\subset M$ defined by~\eqref{def:perforateDom} satisfy
\[\lim\limits_{\varepsilon\to0}\sigma_1(\Omega_\eps)|\partial\Omega_\varepsilon|^{1/(m-1)}=+\infty\,.\]
\end{corollaire}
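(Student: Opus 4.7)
The plan is a short direct calculation combining the sharp asymptotic for $\sigma_1(\Omega_\eps)$ from Corollary~\ref{cor:introsigmak} with the classical tube formula for the measure of $\partial T_\eps$. Because the hypothesis $0<n<m-2$ makes the constant $m-n-2$ strictly positive, Corollary~\ref{cor:introsigmak} gives
\begin{equation*}
  \sigma_1(\Omega_\eps)\;\sim\;\frac{m-n-2}{\eps}\qquad\text{as }\eps\to 0.
\end{equation*}
All that remains is to show that the perimeter $|\partial\Omega_\eps|$ shrinks strictly faster than $\eps^{m-1}$, so that the product explodes.

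For $\eps$ smaller than the injectivity radius of $N$ in $M$, the normal exponential map identifies $T_\eps$ with the disk bundle of radius $\eps$ in the normal bundle of $N$, and $\partial\Omega_\eps=\partial T_\eps$ with the corresponding sphere bundle. The Weyl/Gray tube formula then gives
\begin{equation*}
  |\partial\Omega_\eps|\;=\;|\s^{m-n-1}|\,\mathrm{vol}(N)\,\eps^{m-n-1}\,\bigl(1+O(\eps)\bigr),
\end{equation*}
where $|\s^{m-n-1}|$ denotes the volume of the round unit sphere. Raising to the power $1/(m-1)$, multiplying by the asymptotic of $\sigma_1(\Omega_\eps)$, and simplifying the exponent $(m-n-1)/(m-1)-1=-n/(m-1)$ yields
\begin{equation*}
  \sigma_1(\Omega_\eps)\,|\partial\Omega_\eps|^{1/(m-1)}\;\sim\;(m-n-2)\,\bigl(|\s^{m-n-1}|\,\mathrm{vol}(N)\bigr)^{1/(m-1)}\,\eps^{-n/(m-1)}.
\end{equation*}
Since $n>0$, the right-hand side diverges as $\eps\to 0$, giving the claim.

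There is essentially no obstacle once Theorem~\ref{thm:PrincipalIntro} is available: the two quantitative ingredients (the eigenvalue asymptotic and the tube formula) are on hand, and the two codimension hypotheses play complementary roles, the condition $n<m-2$ being used precisely to obtain a positive coefficient in $\sigma_1\sim (m-n-2)/\eps$, and the condition $n>0$ being used to make the perimeter shrink strictly faster than the rate at which $\sigma_1$ grows. Conceptually the statement says that perforating a compact manifold along a closed submanifold of codimension at least $3$ already destroys any Euclidean-type isoperimetric bound for $\sigma_1$, giving a construction much simpler than the homogenization argument of~\cite{girouard2020large}.
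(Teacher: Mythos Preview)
Your proof is correct and follows essentially the same route as the paper: combine the asymptotic $\sigma_1(\Omega_\eps)\sim (m-n-2)/\eps$ from Corollary~\ref{cor:introsigmak} with the tube estimate $|\partial\Omega_\eps|\sim c\,\eps^{m-n-1}$ to obtain a product behaving like a constant times $\eps^{-n/(m-1)}$, which diverges since $n>0$. You give slightly more detail (the explicit constant from the tube formula and the respective roles of the hypotheses $n<m-2$ and $n>0$), but the argument is the same.
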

\begin{proof}
 Because the volume of the boundary behaves as $|\partial\Omega_\eps|\sim c\eps^{m-n-1}$, if $n\neq m-2$ it follows from~\eqref{eq:mainresultIntro} that
  \begin{align*}
    \sigma_1(\Omega_\eps)|\partial\Omega_\eps|^{1/(m-1)}\sim c\sigma_1(\Omega_\eps)\eps^{\frac{m-n-1}{m-1}}
    =c\eps\sigma_1(\Omega_\eps)\eps^{\frac{-n}{m-1}}\sim  (m-n-2)\eps^{\frac{-n}{m-1}}\to\infty.
  \end{align*}
  If $n=m-2$, it follows from~\eqref{eq:logimprovement} that
  \begin{gather*}
     \sigma_1(\Omega_\varepsilon)|\partial\Omega_\varepsilon|^{1/(m-1)}\sim c\sigma_1(\Omega_\varepsilon)\varepsilon^{\frac{m-n-1}{m-1}} =c\varepsilon|\log\varepsilon|\sigma_1(\Omega_\varepsilon)\frac{\varepsilon^{-\frac{m-2}{m-1}}}{|\log\varepsilon|}\sim c\frac{\varepsilon^{-\frac{m-2}{m-1}}}{|\log\varepsilon|}\to\infty\,.
\end{gather*}
\end{proof}

\begin{remarque}
  The behaviour of $\sigma_1(\Omega_\eps)$ for a point does not lead to divergence. Indeed, if $N$ is a point, then it follows from Theorem~\ref{thm:pointIntro} that
  \begin{gather*}
    \sigma_1(\Omega_\eps)|\partial\Omega_\eps|^{1/(m-1)}\sim c\sigma_1(\Omega_\eps)\eps \xrightarrow{\eps\to 0}c(m-1).
  \end{gather*}  
\end{remarque}

\begin{remarque}
Another normalization that interests some authors is $\sigma_{k}(\Omega)\frac{|\partial\Omega|}{|\Omega|^{\frac{m-2}{m}}}$, see~\cite{GirKarpLag}. In this case, the normalisation does not lead to divergence. Indeed, since $|\partial\Omega_\eps|\sim c\eps^{m-n-1}$ and $|\Omega_\eps|\xrightarrow{\eps\to 0}|\Omega|$, it follows from \eqref{eq:limitsigmakintro} that
\[\sigma_{k}(\Omega_\eps)\frac{|\partial\Omega_\eps|}{|\Omega_\eps|^{\frac{m-2}{m}}}\sim\frac{c\eps\sigma_{k}(\Omega_\eps)\eps^{m-n-2}}{|\Omega|^{\frac{m-2}{m}}}\sim\frac{c(m-n-2)\eps^{m-n-2}}{|\Omega|^{\frac{m-2}{m}}}\xrightarrow{\eps\to 0}0\,.\]
\end{remarque}

\subsubsection{Upper bound involving the intersection index and injectivity radius of the boundary} Corollary~\ref{cor:valeurproprenormalisee} provides a new family of manifolds with large Steklov spectral gap.  By comparing it with known upper bounds one can investigate the necessity of various geometric quantities it involves. For instance, in the recent paper~\cite{colbois2020upper}, Colbois and Gittins have provided upper bounds for the Steklov eigenvalues $\sigma_k$ of submanifolds $\Omega^m$ in $\R^d$ in terms of an intersection index $i(\Omega)$ which counts the number of intersection between $\Omega$ and a generic $p$-plane $\Pi\subset\R^d$, where $p=d-m$. Their bounds also involve the injectivity radius $\mbox{inj}(\partial\Omega)$ of the boundary, as well as its volume:
\begin{gather}\label{ineq:ColboisGittins}
  \sigma_k(\Omega)\leq{A}(m)\frac{i(\Omega)}{\mbox{inj}(\partial\Omega)}+{B}(m)i(\Omega)\bigg(\frac{i(\partial\Omega)k}{|\partial\Omega|}\bigg)^{1/m-1}.
\end{gather}
We show that the presence of the injectivity radius in the denominator of the first term in the right-hand-side of~\eqref{ineq:ColboisGittins} is essential.
Let $M\subset\R^{m+1}$ be any closed hypersurface, with $m\geq 4$. Let $N\subset M$ be a closed curve and consider our usual $\Omega_\eps\subset M$ as defined in~\eqref{def:perforateDom}. Apply inequality~\eqref{ineq:ColboisGittins} to $\Omega_\eps$ and multiply by $\eps>0$ on both sides to obtain:
\begin{gather}
  \eps\sigma_1(\Omega_\eps)\leq{A}(m)\frac{\eps i(\Omega_\eps)}{\mbox{inj}(\partial\Omega_\eps)}+{B}(m)i(\Omega_\eps)\eps\bigg(\frac{i(\partial\Omega_\eps)}{|\partial\Omega_\eps|}\bigg)^{1/m-1}.
\end{gather}
  It follows from Corollary~\ref{cor:valeurproprenormalisee} that $\eps\sigma_1(\Omega_\eps)\xrightarrow{\eps\to0} m-3$, while the intersection indices $i(\Omega_\eps)$ and $i(\partial\Omega_\eps)$ are uniformly bounded and the volume of the boundary satisfies $|\partial\Omega_\eps|\sim c\eps^{m-2}$. Hence there is a constant $K$ such that
\begin{gather*}
  m-3\leq K\limsup_{\eps\to 0}\frac{\eps}{\mbox{inj}(\partial\Omega_\eps)}+\eps^{1/(m-1)}.
\end{gather*}
If the injectivity radius did not occur in~\eqref{ineq:ColboisGittins}, then it would also not occur in this last inequality and the right-hand-side would tend to 0, which is impossible because $m\geq 4$.
By rescaling this construction, we obtain the following result.
\begin{corollaire}
  For $m\geq 3$, there exits a family of smooth hypersurfaces $\Omega_\eps\subset\R^{m+1}$ such that $|\partial\Omega_\eps|=1$ with $i(\Omega_\eps)$ and $i(\partial\Omega_\eps)$ bounded and with $\sigma_1(\Omega_\eps)\xrightarrow{\eps\to0}\infty$.
\end{corollaire}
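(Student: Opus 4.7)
The plan is to normalize the boundary volume of the family $\Omega_\eps$ constructed in the preceding discussion by a global homothety of $\R^{m+1}$. Fix a closed hypersurface $M\subset\R^{m+1}$ and a closed curve $N\subset M$, and let $\Omega_\eps:=M\setminus T_\eps(N)$ as in~\eqref{def:perforateDom}. Set $\lambda_\eps:=|\partial\Omega_\eps|^{-1/(m-1)}$ and $\widetilde{\Omega}_\eps:=\lambda_\eps\,\Omega_\eps\subset\R^{m+1}$; by construction $|\partial\widetilde{\Omega}_\eps|=1$. Since Steklov eigenvalues transform under a homothety of ratio $\lambda$ by $\sigma_k(\lambda\Omega)=\lambda^{-1}\sigma_k(\Omega)$, this gives
\[\sigma_1(\widetilde{\Omega}_\eps)=\sigma_1(\Omega_\eps)\,|\partial\Omega_\eps|^{1/(m-1)}\xrightarrow{\eps\to0}\infty,\]
directly from Corollary~\ref{cor:valeurproprenormalisee} when $m\geq 4$, and from~\eqref{eq:logimprovement} combined with $|\partial\Omega_\eps|\sim c\eps$ in the borderline case $m=3$, $n=1$.

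It remains to check the intersection indices. Both $i(\Omega)$ and $i(\partial\Omega)$ count transverse intersections with a generic affine subspace of $\R^{m+1}$ and are therefore invariant under homotheties, so $i(\widetilde{\Omega}_\eps)=i(\Omega_\eps)$ and $i(\partial\widetilde{\Omega}_\eps)=i(\partial\Omega_\eps)$, and it suffices to bound the latter uniformly in $\eps$. For $\Omega_\eps\subset M$ this is immediate: a generic line $\Pi\subset\R^{m+1}$ meets $\Omega_\eps$ in a subset of $\Pi\cap M$, so $i(\Omega_\eps)\leq i(M)$. For $\partial\Omega_\eps=\partial T_\eps(N)$, once $\eps$ is below the focal radius of $N$ in $M$, the set $\partial T_\eps(N)$ is an embedded $S^{m-2}$-bundle over $N$ contained in an arbitrarily small neighbourhood of $N$ in $\R^{m+1}$. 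A generic $2$-plane $\Pi\subset\R^{m+1}$ then meets $\partial T_\eps(N)$ only near the finitely many points of $\Pi\cap N$, and at each such point locally picks up the intersection of a straight line with an $\eps$-sphere in the normal $(m-1)$-plane to $N$, contributing at most two points; hence $i(\partial\Omega_\eps)\leq 2\,i_{\R^{m+1}}(N)$ uniformly for small $\eps$.

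This last boundedness is the only step with any geometric content; everything else is the scaling calculation above. Note moreover that after rescaling $\mathrm{inj}(\partial\widetilde{\Omega}_\eps)\to 0$, which is precisely the mechanism allowing $\sigma_1(\widetilde{\Omega}_\eps)$ to diverge while the injectivity-radius-free part of the right-hand side of~\eqref{ineq:ColboisGittins} remains finite, consistently with the paragraph preceding the corollary.
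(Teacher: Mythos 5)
Your proof is correct and takes essentially the same route as the paper, whose entire argument for this corollary is the phrase ``by rescaling this construction'': a homothety by $\lambda_\eps=|\partial\Omega_\eps|^{-1/(m-1)}$, the scaling law $\sigma_1(\lambda\Omega)=\lambda^{-1}\sigma_1(\Omega)$, and the previously asserted boundedness of the intersection indices. You add two worthwhile details the paper glosses over: the case $m=3$, which falls outside the preceding discussion (stated for $m\geq4$) and outside Corollary~\ref{cor:valeurproprenormalisee}, and which you correctly recover from the borderline asymptotic \eqref{eq:logimprovement} together with $|\partial\Omega_\eps|\sim c\eps$; and a sketch of why $i(\Omega_\eps)\leq i(M)$ and $i(\partial\Omega_\eps)$ stays bounded as $\eps\to0$, which the paper merely asserts.
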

\begin{remarque}
  In their paper~\cite{colbois2020upper}, Colbois and Gittins also presented an example which proves the necessity of a first term which involves the injectivity radius. Their example is more specific to this task and we feel that our construction is more flexible. See~\cite{ColboisGirouard2021} for another recent application of Theorem~\ref{thm:PrincipalIntro}.

\end{remarque}
\subsection{Discussion and existing literature}
The behaviour of Steklov eigenvalues under small excision has already been studied in various contexts. In their paper~\cite{FraserSchoen}, Fraser and Schoen considered a perforation of a manifold with boundary using a tubular neighborhood of a curve that connects distinct points on the boundary. See also~\cite{Hong} for similar higher-dimensional surgeries. A particularly important inspiration for the current project was the recent paper~\cite{CHIADOPIAT2020100007} by Chiad\`o Piat and Nazarov, in which they consider the excision of a compact domain $\Omega\subset\R^3$ containing the origin by thin tubular neighborhoods of a closed planar curve that is contained in the planar section $\Omega\cap\{z=0\}$. In their work, the cross-section of the tube does not have to be circular. Rather, it is described by a bounded open set $\omega\subset\R^2$. For mixed Steklov-Neumann eigenvalues in the mid-frequency range $\{\sigma\in\lbrack0,+\infty)~|~\sigma<c\varepsilon^{-1}\}$ they prove
\[\lim\limits_{\varepsilon\to0}\varepsilon|\log\varepsilon|\sigma_k^\varepsilon=\frac{2\pi}{|\partial\omega|}\,.\]
For the unit disk $\omega=\D$ this coincides with our asymptotic~\eqref{eq:logimprovement}.
While the method of~\cite{CHIADOPIAT2020100007} leads to more precision (full asymptotic expansions are proved), our Theorem~\ref{thm:PrincipalIntro} applies to a much more general geometric context. Moreover, the proof of Theorem~\ref{thm:PrincipalIntro} is very simple in comparison to the pseudodifferential techniques that are developped in~\cite{CHIADOPIAT2020100007}, and they lead to convergence results for the full spectrum rather than for eigenvalues in the mid-frequency range.

\subsection{Plan of the paper}
In Section~\ref{section:quasiiso}, we use Fermi coordinates to show that any closed submanifold $N\subset M$ admits tubular neighborhoods that are quasi-isometric to products. This allows the comparison of Steklov eigenvalues with mixed Steklov-Dirichlet and Steklov-Neumann eigenvalues on these products. These are then computed, in Section \ref{section:brack}, using separation of variables. The resulting mixed eigenvalues are expressed in terms of $\eps$ and of the codimension of $N$ in $M$ and this allows the proof of the main result in Section \ref{section:main}.

\section{Quasi-isometry}\label{section:quasiiso}
The proof of Theorem~\ref{thm:PrincipalIntro} is based on comparison between Steklov eigenvalues of $\Omega_\eps$ with eigenvalues of mixed Steklov-Dirichlet and Steklov-Neumann problems on tubular neighborhoods $T_\eps$ of the submanifolds $N$. For these problems, separation of variables makes it possible to compute the spectrum explicitly for a Riemannian metric that is comparable to the orginial metric $g$ in the sense of quasi-isometries.
\begin{definition}\label{def:quasiiso}
Let $g_1\,,g_2$ be two Riemannian metrics on a given manifold $M$. We say that $g_1$ and $g_2$ are quasi-isometric with constant $K\geq 1$ if for all $p\in M$ and for all $v\in T_pM\backslash\{0\}$,
\[\frac{1}{K}\leq\frac{g_1(v,v)}{g_2(v,v)}\leq K\,.\]
\end{definition}
The next proposition shows that any submanifold $N^n\subset M^m$ of positive codimension admits a neighbourhood which is quasi-isometric to a cylinder $N\times\mathbb{B}^{m-n}(\delta)$ with a constant that is arbitrarily close to $1$.
\begin{proposition}\label{prop:quasi-iso}
Let $(M,g)$ be an $m-$dimensional Riemannian compact manifold and $N\subset M$ a compact submanifold of dimension $n<m$. For every $\varepsilon_0>0$, there exists $\delta>0$ such that, on $\{p\in M~|~d_g(p,N)<\delta\}$, $g$ is quasi-isometric to the product metric $\Tilde{g}:= h\oplus g_E$ with constant $1+\varepsilon_0$. Here, $h$ is the restriction of $g$ to $N$ and $g_E$ is the $(m-n)-$dimensional Euclidean metric.
\end{proposition}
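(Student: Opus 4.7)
The plan is to use Fermi (normal) coordinates around $N$ to identify a tubular neighbourhood of $N$ with the product cylinder $N \times \B^{m-n}(\delta)$, observe that in these coordinates the metric $g$ coincides with $\tilde g = h \oplus g_E$ along the zero section $N \times \{0\}$, and then promote this pointwise agreement to a uniform quasi-isometry on a thin tube by a continuity and compactness argument.

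First I would set up the coordinates. Since $N$ is compact and embedded in $M$, the normal exponential map $\exp^\perp$ is a diffeomorphism from $\{v \in NN : |v|_g < \delta_0\}$ onto the tubular neighbourhood $T_{\delta_0} = \{p \in M : d_g(p, N) < \delta_0\}$ for some uniform $\delta_0 > 0$. Over a coordinate chart $(U, x^1, \dots, x^n)$ of $N$ equipped with a smooth $g$-orthonormal frame $\{\nu_1, \dots, \nu_{m-n}\}$ of the normal bundle $NN$, the map
\[
  (x, y) \longmapsto \exp^\perp_{p(x)}\!\bigl(\textstyle\sum_\alpha y^\alpha \nu_\alpha(p(x))\bigr)
\]
provides local coordinates identifying a piece of $T_{\delta_0}$ with a piece of $U \times \B^{m-n}(\delta_0)$. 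Along the zero section $\{y = 0\}$ one computes directly that $g_{ij}(x,0) = h_{ij}(x)$ (since $\partial_{x^i}$ is tangent to $N$), $g_{i\alpha}(x,0) = 0$ (since $\partial_{y^\alpha}|_{y=0} = \nu_\alpha$ is normal to $N$), and $g_{\alpha\beta}(x,0) = \delta_{\alpha\beta}$ (orthonormality of the $\nu_\alpha$). Hence $g$ and $\tilde g = h \oplus g_E$ coincide as bilinear forms on $TM|_N$.

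Finally, consider the continuous function $F(p, v) = g(v,v)/\tilde g(v,v)$, defined for $v \neq 0$ on the tangent bundle over $T_{\delta_0}$; it descends to a continuous function on the $\tilde g$-unit sphere bundle. The previous step shows $F \equiv 1$ on the fibres above $N$. Since $N$ is compact, so is the $\tilde g$-unit sphere bundle above $N$, and continuity gives, for every $\eta > 0$, some $\delta \in (0, \delta_0)$ such that $|F - 1| \leq \eta$ on the sphere bundle over $\{p : d_g(p, N) < \delta\}$. Picking $\eta$ small enough so that $1 - \eta \geq (1+\eps_0)^{-1}$ and $1 + \eta \leq 1 + \eps_0$ yields the desired quasi-isometry constant. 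The main technical point is that the local Fermi charts must patch consistently along the compact submanifold $N$, but this causes no real difficulty since the ratio $F$ is chart-independent and a finite subcover of $N$ provides a uniform $\delta_0$. I do not anticipate any genuine obstacle: the result is essentially a continuity argument once the Fermi coordinate picture has been set up.
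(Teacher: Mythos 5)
Your proof is correct and follows essentially the same route as the paper: Fermi coordinates along $N$, the observation that $g$ coincides with $h\oplus g_E$ on $TM|_N$, and a continuity--compactness argument to propagate the agreement to a thin tube. The only cosmetic differences are that you phrase the uniformity via the ratio $F$ on the compact unit sphere bundle (arguably cleaner than the paper's componentwise $\varepsilon_0/L$ estimate) and you omit the paper's cutoff interpolation $\overline g$, which is not needed for the statement as given.
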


The proof of Proposition~\ref{prop:quasi-iso} is based on the use of Fermi coordinates along the submanifold $N\subset M$. The Fermi coordinates are a generalization of normal coordinates. Given a point $p\in N$, there exist a system of coordinates $(y_1,\ldots,y_n;U)$ on a open neighbourhood $U\subset N$ containing $p$, a $\delta>0$ and a small neighbourhood $\mathcal{O}\subset \{(q,v)~|~ q\in U\text{ and } v\in T_qM\,,v\perp T_qN\}$ such that the exponential map $\exp|_\mathcal{O}:\mathcal{O}\to T_\delta$ is a diffeomorphism, with $T_\delta:=\{p\in M~|~d_g(p,N)<\delta\}$. 
The Fermi coordinates around $p$ are given by $(y_1,\ldots,y_n,\exp_{(y_1,\ldots,y_n)}^{-1})$. See \cite{tubes} for a concise presentation of these coordinates and their fundamental properties.

\begin{proof}
Let us recall that for $s>0$, $T_s:=\{p\in M~|~d_g(p,N)<s\}$.

Let $\varepsilon_0>0$. 
Let $(x_1,\ldots,x_m)$ be the Fermi coordinates around a point $p\in N$ on an open set $U\subset M$. Then, on $N\cap U$, $x_1,\ldots,x_n$ form a system of coordinates on $N$. Moreover, on $N\cap U$, the vector fields $\frac{\partial}{\partial x_i}$, for $i=n+1,\ldots,m$, are orthonormal. 
Thus, for every $p\in N\cap U$, the metric $g$ is of the form
\begin{gather}\label{eq:metricFermi}
  g_{ij}(p)=
  \begin{cases}
    h_{ij}&\mbox{ for $1\leq i,j\leq n$}\,, \\
    0&\mbox{ for $1\leq i \leq n$ and $n+1\leq j\leq m$}\,,\\
    \delta_{ij}&\mbox{ for $n+1\leq i,j\leq m$}\,.
  \end{cases}
\end{gather}
Let $\tilde{g}:=h\oplus g_E$ defined on $U\subset M$. In other words, the same formula~\eqref{eq:metricFermi} is used for all $x\in U$:
  \[
    \tilde{g}_{ij}(x)=
    \begin{cases}
      h_{ij}&\mbox{ for $1\leq i,j\leq n$}\,, \\
      0&\mbox{ for $1\leq i \leq n$ and $n+1\leq j\leq m$}\,,\\
      \delta_{ij}&\mbox{ for $n+1\leq i,j\leq m$}\,.
    \end{cases}
  \]

There exists $L\in\N$ such that $1-\varepsilon_0/L\geq1/(1+\varepsilon_0)$.
By continuity of $g$, there exists $\delta>0$ such that if $q\in U$ and $q\in T_{3\delta}$, then
\[|g_{ij}(q)-\Tilde{g}_{ij}(q)|<\varepsilon_0/L\,.\]
For $v\in T_qM$ such that $\Tilde{g}(v,v)=1$, 
\[\frac{1}{1+\varepsilon_0}\leq 1-\varepsilon_0/L\leq g(v,v)\leq 1+\varepsilon_0/L\leq 1+\varepsilon_0\,.\]
By linearity of $g$, it follows that
\[\frac{1}{1+\varepsilon_0}\leq\frac{g(v,v)}{\Tilde{g}(v,v)}\leq 1+\varepsilon_0\,,\]
for every $v\in T_qM\backslash\{0\}$.

Let $\chi\in C^\infty(M)$ be such that 
\begin{gather*}
    0\leq\chi(x)\leq 1\text{ for all }x\in M\,,\\
    \chi\equiv1 \text{ in } T_{\delta}\,,\\
     \chi\equiv0 \text{ in } M\backslash T_{3\delta/2}\,.
\end{gather*}
Define
\[\overline{g}:=\begin{cases}
(1-\chi)g+\chi\Tilde{g}&\mbox{ on $T_{3\delta}$},\\
g&\mbox{ elsewhere }.
\end{cases}\]
By the previous computation, $\overline{g}$ and $g$ are quasi-isometric with constant $1+\varepsilon_0$ on $M$. Moreover, on $T_\delta$, since $\overline{g}=\Tilde{g}$, it follows that $\Tilde{g}$ and $g$ are quasi-isometric with constant $1+\varepsilon_0$.
\end{proof}

\begin{remarque}\label{remarque:quasi-isopoint}
In the case where $N$ is a point $p$, the Fermi coordinates around $p$ are the normal coordinates $x_1,\ldots,x_m$ given by the inverse of the exponential map. For each $i$, the vector fields $\frac{\partial}{\partial x_i}\bigg|_p$ satisfy
\[\frac{\partial}{\partial x_i}\bigg|_p=d(\exp_p)_0(e_i)=e_i\,.\]
Thus, centered at $p$, the metric $g$ is the Euclidean metric $g_E$.
By a similar argument as seen previously, we show that $g$ is quasi-isometric to $g_E$ with constant $1+\varepsilon_0$ over $B_\delta(p)$.
\end{remarque}

The following proposition is borrowed from \cite[Proposition 2.2]{CEG3}.
\begin{proposition}\label{prop:quasiisovp}
Let $M$ be a Riemannian manifold of dimension $m$. Let $g_1\,,g_2$ be two Riemannian metrics on $M$ which are quasi-isometric with constant $K$. The Steklov eigenvalues with respect to $g_1$ and to $g_2$ satisfy the following inequality
\[\frac{1}{K^{m+1/2}}\leq\frac{\sigma_k(M,g_1)}{\sigma_k(M,g_2)}\leq K^{m+1/2}\,.\]
\end{proposition}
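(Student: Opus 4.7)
The plan is to exploit the variational (min--max) characterization of Steklov eigenvalues together with a pointwise comparison of all the ingredients of the Rayleigh quotient. Recall that for any Riemannian metric $g$ on $M$,
\[
\sigma_k(M,g)=\min_{E_{k+1}}\max_{u\in E_{k+1}\setminus\{0\}}\frac{\int_M |\nabla_g u|_g^2\,dv_g}{\int_{\partial M} u^2\,dA_g},
\]
where $E_{k+1}$ ranges over $(k+1)$-dimensional subspaces of $H^1(M)$ transverse to the constants. Since each subspace is compared using the \emph{same} space of test functions $u\in H^1(M)$, it suffices to bound the ratio of the two Rayleigh quotients uniformly in $u$.

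First I would collect three pointwise estimates from the quasi-isometry hypothesis $K^{-1}g_2\leq g_1\leq K g_2$. On volumes, since the volume density involves $\sqrt{\det g}$, one has $K^{-m/2}\,dv_{g_2}\leq dv_{g_1}\leq K^{m/2}\,dv_{g_2}$. The restrictions of $g_1$ and $g_2$ to $\partial M$ are quasi-isometric with the same constant $K$ as $(m-1)$-dimensional metrics, so $K^{-(m-1)/2}\,dA_{g_2}\leq dA_{g_1}\leq K^{(m-1)/2}\,dA_{g_2}$. Finally, taking inverses (as matrices) reverses the quasi-isometry inequalities only up to the same constant: $K^{-1}g_2^{-1}\leq g_1^{-1}\leq K g_2^{-1}$ on the cotangent bundle. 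Since $|\nabla_g u|_g^2=g^{ij}\partial_i u\,\partial_j u$ is quadratic in $du$, this yields $K^{-1}|\nabla_{g_2}u|_{g_2}^2\leq |\nabla_{g_1}u|_{g_1}^2\leq K|\nabla_{g_2}u|_{g_2}^2$.

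Combining these three estimates for a fixed nonzero $u\in H^1(M)$ gives
\[
\frac{\int_M |\nabla_{g_1}u|_{g_1}^2\,dv_{g_1}}{\int_{\partial M} u^2\,dA_{g_1}}\leq \frac{K\cdot K^{m/2}}{K^{-(m-1)/2}}\cdot\frac{\int_M |\nabla_{g_2}u|_{g_2}^2\,dv_{g_2}}{\int_{\partial M} u^2\,dA_{g_2}}=K^{m+1/2}\,\frac{\int_M |\nabla_{g_2}u|_{g_2}^2\,dv_{g_2}}{\int_{\partial M} u^2\,dA_{g_2}},
\]
and the symmetric lower bound by a factor $K^{-(m+1/2)}$.

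Applying the min--max principle to both sides then yields $\sigma_k(M,g_1)\leq K^{m+1/2}\sigma_k(M,g_2)$; swapping the roles of $g_1$ and $g_2$ gives the reverse inequality, which is the claim. The only mildly subtle step is the dualization from $g$ to $g^{-1}$ when comparing gradients; everything else is a clean bookkeeping of the factors of $\sqrt{\det g}$ in the interior volume and in the boundary measure, and those exponents add to exactly $m+1/2$.
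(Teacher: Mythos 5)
Your argument is correct, and it is exactly the variational comparison used in the reference the paper cites for this statement (\cite[Proposition 2.2]{CEG3}): bound the gradient term by $K$, the interior volume form by $K^{m/2}$, and the boundary measure by $K^{(m-1)/2}$, then apply min--max, with $1+\tfrac{m}{2}+\tfrac{m-1}{2}=m+\tfrac12$. The only cosmetic point is that in the min--max characterization one should range over $(k+1)$-dimensional subspaces of $H^1(M)$ whose traces on $\partial M$ remain $(k+1)$-dimensional (rather than ``transverse to the constants''), but this does not affect the comparison.
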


\section{Mixed Steklov problems}\label{section:brack}

Let $\Omega$ be a Riemannian manifold with boundary $\partial\Omega$. Consider an open neighborhood of the boundary $A\subsetneq\Omega$. In other words $A\subsetneq\Omega$ is open and satisfies $\partial\Omega=A\cap\partial\Omega$. Let $\Sigma:=\partial A\setminus\partial\Omega$ be the inner part of the boundary of $A$.
We will use the following mixed Steklov-Dirichlet and Steklov-Neumann:
\begin{equation*}
    \begin{cases}
    \Delta u=0&\mbox{ in $A$}\,,\\
    \partial_n u=0 &\mbox{ on $\Sigma$}\,,\\
    \partial_n u=\sigma^N u &\mbox{ on $\partial\Omega$}\,,
  \end{cases}
  \qquad\qquad\text{ and }\qquad\qquad
    \begin{cases}
    \Delta u=0&\mbox{ in $A$}\,,\\
    u=0 &\mbox{ on $\Sigma$}\,,\\
    \partial_n u=\sigma^N u &\mbox{ on $\partial\Omega$}\,.
    \end{cases}
\end{equation*}
Their spectra are given by unbounded sequences of eigenvalues
$0=\sigma_0^N<\sigma_1^N(A)\leq\sigma_2^N(A)\leq\cdots$ and
$0<\sigma_1^D\leq\sigma_2^D(A)\leq\sigma_3^D(A)\leq\cdots$ and it follows from their variational characterizations that for all $j\geq0$, the following inequality holds:
\[\sigma_j^N(A)\leq\sigma_j(\Omega)\leq\sigma_{j+1}^D(A)\,.\]
This is a classical application of the Dirichlet--Neumann bracketing. See  \cite[Section 2]{colbois2018steklov} for details.

\subsection{Steklov-Dirichlet problem on products}

As seen previously, a tubular neighbourhood of $N$ is quasi-isometric to the product manifold $N\times\mathbb{B}^{m-n}(\delta)$. We will study the Steklov-Dirichlet problem on the manifold $N\times\lbrack\varepsilon,\delta\rbrack\times\s^{m-n-1}$ equipped with the Riemannian metric 
\[h\oplus dr^2\oplus r^2g_0\,,\]
where $h$ is the metric on $N$ and $g_0$ is the round metric on $\s^{m-n-1}$.

\begin{lemme}\label{lemme:dirichlet}
The spectrum of the mixed problem
\begin{equation}\label{eq:steklov_dirichlet}
    \begin{cases}
    \Delta u=0 & \mbox{ in } N\times(\varepsilon,\delta)\times\s^{m-n-1}\,,\\
    \partial_n u=\sigma^D u &\mbox{ on } N\times\{\varepsilon\}\times\s^{m-n-1}\,,\\
    u=0&\mbox{ on } N\times\{\delta\}\times\s^{m-n-1}\,,
    \end{cases}
\end{equation}
is given by
 \[\bigsqcup_{k\geq0}\sigma(\Lambda^{\lambda_k,D})\,,\]
where $\lambda_k$ is the $k$-th eigenvalue of the Laplacian on $N$
and $\sigma(\Lambda^{\lambda_k,D})$ is the spectrum of the operator $\Lambda^{\lambda_k,D}:C^\infty(\{\varepsilon\}\times\s^{m-n-1})\to C^\infty(\{\varepsilon\}\times\s^{m-n-1})$ defined by
\[\Lambda^{\lambda_k,D}(g):=\partial_n G\,,\]
where $G$ is the unique solution of the problem
 \begin{equation}\label{eq:problemeDN}
 \begin{cases}
\Delta G= \lambda_k G & \mbox{ in $(\varepsilon,\delta)\times\s^{m-n-1}$}\,,\\
G=g & \mbox{ on $\{\varepsilon\}\times\s^{m-n-1}$}\,,\\
G=0 & \mbox{ on $\{\delta\}\times\s^{m-n-1}$}\,.
\end{cases}
\end{equation}
\end{lemme}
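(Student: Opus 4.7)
The plan is to perform separation of variables, taking advantage of the fact that the metric $h \oplus dr^2 \oplus r^2 g_0$ is a Riemannian product, with the latter two factors combining to give the Euclidean metric on the annulus $A := \{\varepsilon < r < \delta\} \subset \R^{m-n}$ written in polar coordinates. Consequently, the Laplacian decomposes as $\Delta = \Delta_N + \Delta_A$, and the outward unit normal on each component of $N \times \partial A$ is purely radial, which makes the mixed boundary conditions fully compatible with this product structure fiber-wise.

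First, I would fix a complete $L^2$-orthonormal basis $\{\phi_k\}_{k \geq 0}$ of smooth eigenfunctions of the Laplacian on the closed manifold $(N,h)$, with corresponding eigenvalues $\lambda_k$ (in the sign convention that makes equation~\eqref{eq:problemeDN} consistent with harmonicity of the product $\phi_k \otimes G_k$), and use the orthogonal tensor decomposition
\[L^2\bigl(N \times (\varepsilon, \delta) \times \s^{m-n-1}\bigr) \;=\; \bigoplus_{k \geq 0} \phi_k \otimes L^2\bigl((\varepsilon, \delta) \times \s^{m-n-1}\bigr).\]
Any candidate eigenfunction is then expanded as $u(y, r, \theta) = \sum_k \phi_k(y) G_k(r, \theta)$, and substitution into $\Delta u = 0$ decouples the problem into the independent equations $\Delta_A G_k = \lambda_k G_k$ on the annulus, while the outer Dirichlet condition forces $G_k \equiv 0$ on $\{r = \delta\} \times \s^{m-n-1}$ for every $k$.

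Next, I would verify that the boundary value problem~\eqref{eq:problemeDN} is uniquely solvable for every $k \geq 0$ and every smooth boundary datum $g$ on $\{\varepsilon\} \times \s^{m-n-1}$. This is standard elliptic theory: the bilinear form associated to $-\Delta_A + \lambda_k$ is coercive on functions vanishing on the outer sphere, and the outer Dirichlet condition removes any spectral obstruction (including the $\lambda_0 = 0$ case). The resulting Dirichlet-to-Neumann operator $\Lambda^{\lambda_k,D}$ is a self-adjoint elliptic pseudodifferential operator of order one on $\{\varepsilon\} \times \s^{m-n-1}$, with discrete spectrum accumulating at $+\infty$. The remaining Steklov condition $\partial_n u = \sigma u$ on $N \times \{\varepsilon\} \times \s^{m-n-1}$ then translates fiber-by-fiber into the eigenvalue equation $\Lambda^{\lambda_k,D}(G_k|_{r=\varepsilon}) = \sigma\, G_k|_{r=\varepsilon}$; conversely, any eigenpair of some $\Lambda^{\lambda_k,D}$ lifts, via its harmonic extension to the annulus, to an eigenfunction $\phi_k(y) G_k(r,\theta)$ of~\eqref{eq:steklov_dirichlet} with the same eigenvalue. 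This yields the identification of the Steklov-Dirichlet spectrum with $\bigsqcup_{k \geq 0} \sigma(\Lambda^{\lambda_k,D})$, with multiplicities preserved.

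The main point requiring care is the completeness of the decomposition: one must check that every eigenfunction (not merely those of product form) is captured, so that no eigenvalue is missed. This follows because each subspace $\phi_k \otimes L^2\bigl((\varepsilon,\delta) \times \s^{m-n-1}\bigr)$ is invariant under both $\Delta$ and under the inner and outer boundary operators; once this invariance is in hand, the spectral problem fully decouples and the remainder of the argument reduces to textbook facts about mixed elliptic boundary value problems.
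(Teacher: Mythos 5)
Your proposal is correct and follows the same route as the paper: separation of variables via the eigenbasis $\{\phi_k\}$ of the Laplacian on $N$, reducing the mixed problem to the family of Dirichlet-to-Neumann operators $\Lambda^{\lambda_k,D}$ on $(\varepsilon,\delta)\times\s^{m-n-1}$. Your explicit treatment of completeness (that the orthogonal decomposition of $L^2$ captures \emph{every} eigenfunction, not only those of product form) is in fact more careful than the paper's proof, which only posits solutions of the form $u=F(p)G(r,q)$ and asserts the conclusion.
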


\begin{remarque}\label{rem:dirichlet}
The convention used for the Laplacian on a Riemannian manifold $(\Omega,g)$ is given by the formula
\[\Delta u=\frac{1}{\sqrt{\det G}}\partial_i\big(\sqrt{\det G}g^{ij}\partial_j u\big)\,,\]
where $G$ is the matrix form of the Riemannian metric $g$ et $g^{ij}$ are the components of the inverse of $G$.
There exists a unique solution to the problem \eqref{eq:problemeDN} because $-\lambda_k$ is not an eigenvalue of the Dirichlet problem on $(\varepsilon,\delta)\times\s^{m-n-1}$. Indeed, the Dirichlet eigenvalues are positives and $-\lambda_k$ is nonpositive.
\end{remarque}

\begin{proof}
First of all, the Laplace-Beltrami operator on $N\times(\varepsilon,\delta)\times\s^{m-n-1}$ with the given metric $dr^2\oplus h\oplus r^2g_0$ is given by
\[\Delta u=\Delta_N u+\frac{m-n-1}{r}\partial_r u+\partial_{rr}u+\frac{1}{r^2}\Delta_{\s^{m-n-1}}u\,.\]

Suppose that the solution of problem \eqref{eq:steklov_dirichlet} is of the form $u(p,r,q)=F(p)G(r,q)$.
Then, $\Delta u=0$ becomes
\[\frac{-\Delta_N F}{F}=\frac{\frac{m-n-1}{r}\partial_r G+\partial_{rr}G+\frac{1}{r^2}\Delta_{\s^{m-n-1}}G}{G}=\lambda\,,\]
for some $\lambda\in\R$.

The equation $-\Delta_N F=\lambda F$ is the Laplace equation on $N$ which gives the solution $(\lambda_k,F_k)_{k\geq0}$ with the convention that $\lambda_0=0$.
 
For all $k\geq0$, we have to solve the problem
\[\begin{cases}
\frac{m-n-1}{r}\partial_r G+\partial_{rr}G+\frac{1}{r^2}\Delta_{\s^{m-n-1}}G= \lambda_k G & \mbox{ in $(\varepsilon,\delta)\times\s^{m-n-1}$}\,,\\
\partial_n G=\sigma^D G & \mbox{ on $\{\varepsilon\}\times\s^{m-n-1}$}\,,\\
G=0 & \mbox{ on $\{\delta\}\times\s^{m-n-1}$}\,.
\end{cases}\]
This is the spectral problem associated to the operator $\Lambda^{\lambda_k,D}$.
Thus, it becomes clear that the spectrum of \eqref{eq:steklov_dirichlet} is given by
 \[\bigsqcup_{k\geq0}\sigma(\Lambda^{\lambda_k,D})\,.\]
\end{proof}

As seen in the proof of Lemma \ref{lemme:dirichlet}, to find the spectrum, we need to solve problem \eqref{eq:problemeDN}.
When solving this problem using separation of variables as it will be seen in proof of Lemma \ref{lemme:asimptotiquedirichlet}, we encounter the differential equation
\[x^2R''+xR'-(x^2+\nu^2)R=0\,,\]
whose solutions are called the modified Bessel functions $I_\nu(x)\,, K_\nu(x)$. The differential equation is obtained by replacing $x$ by $\pm ix$ in Bessel's equation (see \cite[Chapter 10.25]{NIST:DLMF} for further information).
In the proof of Lemma \ref{lemme:asimptotiquedirichlet}, we use the following recurrence relations (see \cite[Chapter 10.29]{NIST:DLMF})
\begin{equation}\label{eq:deriveI0}
I_0'(x)=I_1(x)\,,
\end{equation}
\begin{equation}\label{eq:deriveK0}
K_0'(x)=-K_1(x)\,,
\end{equation}
\begin{equation}\label{eq:deriveI}
I_\nu'(x)=I_{\nu-1}(x)-\frac{\nu}{x}I_\nu(x)\,,
\end{equation}
\begin{equation}\label{eq:deriveK}
    K_\nu'(x)=\frac{\nu}{x}K_\nu(x)-K_{\nu+1}(x)\,.
\end{equation}
We also use the following asymptotics, which hold as $x\to0$, (see \cite[Chapter 10.30]{NIST:DLMF})
\begin{equation}\label{eq:asymptotiqueK0}
K_0(x)\sim-\log{x}\,,
\end{equation}
\begin{equation}\label{eq:asymptotiqueI}
I_\nu(x)\sim\frac{(\frac{1}{2}x)^\nu}{\Gamma(\nu+1)}\,,
\end{equation}
\begin{equation}\label{eq:asymptotiqueK}
    K_\nu(x)\sim\frac{\frac{1}{2}\Gamma(\nu)}{(\frac{1}{2}x)^\nu}\,.
\end{equation}

\begin{lemme}\label{lemme:asimptotiquedirichlet}
We have the following asymptotics for the distinct eigenvalues of problem \eqref{eq:problemeDN}:
\newline If $n=m-2$,
\begin{gather*}
    \sigma_{k,0}^D\sim\frac{1}{\varepsilon|\log\varepsilon|}\mbox{ if $k\geq0$}\,,\\
    \sigma_{k,(j)}^D\sim\frac{j}{\varepsilon}\mbox{ if $k=0$ and $\ell>0$ or if $k\,,\ell\neq0$}\,.
\end{gather*}
If $n\neq m-2$,
\begin{gather*}
    \sigma_{k,(j)}^D\sim\frac{m-n-2+j}{\varepsilon}\mbox{ for every $k\,,j\geq0$}\,.
\end{gather*}
\end{lemme}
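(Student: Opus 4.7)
The plan is to carry out a further separation of variables on $(\varepsilon,\delta)\times\s^{m-n-1}$ to reduce \eqref{eq:problemeDN} to a one-dimensional problem on $(\varepsilon,\delta)$, then read off the asymptotics from the tabulated behaviour of the modified Bessel functions. Writing $G(r,q)=R(r)Y(q)$, where $Y$ is a spherical harmonic on $\s^{m-n-1}$ with eigenvalue $\mu_j=j(j+m-n-2)$, the radial ODE becomes
\begin{equation*}
R''(r)+\frac{m-n-1}{r}R'(r)-\left(\lambda_k+\frac{\mu_j}{r^2}\right)R(r)=0.
\end{equation*}
The substitution $R(r)=r^{-(m-n-2)/2}S(r)$ together with $x=\sqrt{\lambda_k}\,r$ (valid when $\lambda_k>0$) transforms this into the modified Bessel equation $x^2S''+xS'-(x^2+\nu_j^2)S=0$, with index $\nu_j^2=((m-n-2)/2)^2+\mu_j=((m-n-2+2j)/2)^2$, i.e.\ $\nu_j=(m-n-2)/2+j$. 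Hence
\begin{equation*}
R(r)=r^{-(m-n-2)/2}\bigl(AI_{\nu_j}(\sqrt{\lambda_k}\,r)+BK_{\nu_j}(\sqrt{\lambda_k}\,r)\bigr),
\end{equation*}
and when $\lambda_k=0$ the equation is equidimensional with solution $R(r)=Ar^j+Br^{-(m-n-2+j)}$ (the logarithmic pair $\{1,\log r\}$ appearing only when $\nu_j=0$, i.e.\ $n=m-2$ and $j=0$).

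Next I would impose the Dirichlet condition $R(\delta)=0$ to determine the ratio $B/A$; this ratio stays bounded as $\varepsilon\to 0$ because $\delta$ is fixed. The Steklov eigenvalue is then read off from $\sigma^D=-R'(\varepsilon)/R(\varepsilon)$, the minus sign coming from the fact that the outward normal on $\{r=\varepsilon\}\times\s^{m-n-1}$ points in the direction of decreasing $r$. Using the asymptotics \eqref{eq:asymptotiqueI} and \eqref{eq:asymptotiqueK} (together with the derivative identities \eqref{eq:deriveI}--\eqref{eq:deriveK}), the term $r^{-(m-n-2)/2}K_{\nu_j}(\sqrt{\lambda_k}\,r)$ has the same leading behaviour $\sim C r^{-(m-n-2+j)}$ near $r=0$ as the singular power in the $\lambda_k=0$ case; similarly $r^{-(m-n-2)/2}I_{\nu_j}(\sqrt{\lambda_k}\,r)\sim C'r^{j}$ matches the regular power. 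Since $\nu_j>0$, the singular piece dominates $R(\varepsilon)$, and one obtains
\begin{equation*}
\sigma^D_{k,j}=-\frac{R'(\varepsilon)}{R(\varepsilon)}\sim \frac{m-n-2+j}{\varepsilon},
\end{equation*}
uniformly in the fact that the equation was of Bessel or of Cauchy--Euler type. This simultaneously handles the $\lambda_k=0$ line (i.e.\ $k=0$) and the $\lambda_k>0$ lines provided $\nu_j>0$.

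The remaining case is $\nu_j=0$, which occurs exactly when $n=m-2$ and $j=0$. For $k=0$ the radial equation is $(rR')'=0$, giving $R(r)=A+B\log r$; the Dirichlet condition forces $R(\varepsilon)=B\log(\varepsilon/\delta)$ and $R'(\varepsilon)=B/\varepsilon$, which yields $\sigma^D_{0,0}\sim 1/(\varepsilon|\log\varepsilon|)$. For $k>0$ the same asymptotic holds using the behaviour \eqref{eq:asymptotiqueK0}: the singular part of $R$ is proportional to $K_0(\sqrt{\lambda_k}\,r)\sim -\log r$ while its derivative, via \eqref{eq:deriveK0}, is $-\sqrt{\lambda_k}\,K_1(\sqrt{\lambda_k}\,r)\sim -1/r$, so once again $\sigma^D_{k,0}\sim 1/(\varepsilon|\log\varepsilon|)$.

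The main obstacle is purely bookkeeping: one must confirm that in each case the singular summand of $R(\varepsilon)$ indeed dominates (so that the ratio $B/A$, fixed by the Dirichlet condition at $\delta$, does not accidentally cancel the leading term) and that the lower-order corrections from \eqref{eq:deriveI}--\eqref{eq:deriveK} do not contribute to the leading asymptotic of $-R'(\varepsilon)/R(\varepsilon)$. Once this is checked, the three cases listed in the statement follow directly from the identity $\nu_j=(m-n-2)/2+j$, separating $\nu_j>0$ from the single exceptional pair $(n,j)=(m-2,0)$.
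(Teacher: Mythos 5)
Your proposal is correct and follows essentially the same route as the paper: separation of variables into spherical harmonics, the substitution $R(r)=r^{-(m-n-2)/2}S(r)$ reducing the radial equation to the modified Bessel equation of index $\nu_j=(m-n-2)/2+j$, the Dirichlet condition at $r=\delta$ fixing a bounded ratio of constants, and the small-argument asymptotics of $I_{\nu}$ and $K_{\nu}$ (with the logarithmic case isolated at $\nu_j=0$). The paper merely organizes the same computation into the two cases $n=m-2$ and $n\neq m-2$ and verifies explicitly the dominance and non-cancellation points you flag at the end.
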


\begin{proof}
Suppose that $G(r,q)=R(r)\phi(q)$. 
Then, we have
\[\frac{r^2R''+(m-n-1)rR'-r^2\lambda_k R}{R}=\frac{-\Delta_{\s^{m-n-1}}\phi}{\phi}=\mu\,,\]
for some $\mu\in\R$.

The equation $-\Delta_{\s^{m-n-1}}\phi=\mu\phi$ admits a solution if $\mu=\mu_{(j)}=j(j+m-n-2)$. The notation $\mu_{(j)}$ means that the distinct eigenvalues are considered. The multiplicity of the eigenvalue $\mu_{(j)}$ is $m_j$.

Finally, for $k\,,j\geq0$, we solve the problem
\begin{equation*}
    \begin{cases}
    r^2R''+(m-n-1)rR'-(r^2\lambda_k+\mu_{(j)})R=0\,,\\
    -R'(\varepsilon)=\sigma^D R(\varepsilon)\,,\\
    R(\delta)=0\,.
    \end{cases}
\end{equation*}

Let us consider two cases:
\begin{enumerate}
    \item $n=m-2$\,,
    \item $n\neq m-2$\,.
\end{enumerate}

\textbf{Case 1: $n=m-2$}
\newline If $k=j=0$, 
\[r^2R''+rR'=0\,,\]
whose solution is $R(r)=a+b\log r$. 
With the condition $R(\delta)=0$, we obtain $a=-b\log\delta$.
With the condition $-R'(\varepsilon)=\sigma^D(\varepsilon)$, we obtain
\[\sigma_{0,0}^D=\frac{1}{\varepsilon\log(\delta/\varepsilon)}\sim\frac{1}{\varepsilon|\log\varepsilon|}\,.\]

If $k=0$ and $j\neq0$, we have
\[r^2R''+rR'-\mu_{(j)} R=0\,,\]
whose solution is $R(r)=ar^j+br^{-j}$.
With the condition $R(\delta)=0$, we obtain $a=-b\delta^{-2j}$.
With the condition $-R'(\varepsilon)=\sigma^D R(\varepsilon)$, we obtain the distinct eigenvalue
\[\sigma_{0,{(j)}}^D=\frac{j(1+\delta^{-2j}\varepsilon^{2j})}{\varepsilon(1-\delta^{-2j}\varepsilon^{2j})}\sim\frac{j}{\varepsilon}\,.\]
Since the multiplicity of $\mu_{(j)}$ is $m_j$, the multiplicity of $\sigma^D_{0,(j)}$ is also $m_j$.

If $k\neq0$ and $j$ is arbitrary, we put $x:=\sqrt{\lambda_k}r$ to obtain
\[x^2R''+xR'-(x^2+j^2)R=0\,,\]
whose solutions are the modified Bessel functions $I_j$ et $K_j$.
So, $R(r)=aI_j(\sqrt{\lambda_k}r)+bK_j(\sqrt{\lambda_k}r)$.
With the condition $R(\delta)=0$, we obtain $a=-b\frac{K_j(\sqrt{\lambda_k}\delta)}{I_j(\sqrt{\lambda_k}\delta)}$.
With the condition $-R'(\varepsilon)=\sigma^D R(\varepsilon)$, we obtain
\[\sigma_{k,(j)}^D=\frac{\sqrt{\lambda_k}\big(\frac{K_j(\sqrt{\lambda_k}\delta)}{I_j(\sqrt{\lambda_k}\delta)}I_j'(\sqrt{\lambda_k}\varepsilon)-K_j'(\sqrt{\lambda_k}\varepsilon)\big)}{K_j(\sqrt{\lambda_k}\varepsilon)-\frac{K_j(\sqrt{\lambda_k}\delta)}{I_j(\sqrt{\lambda_k}\delta)}I_j(\sqrt{\lambda_k}\varepsilon)}\,.\]
Since the multiplicity of $\mu_{(j)}$ is $m_j$, the multiplicity of $\sigma^D_{k,(j)}$ is also $m_j$.

Using the asymptotics \eqref{eq:asymptotiqueK0}, \eqref{eq:asymptotiqueI} with $\nu=0$, we have
\[\frac{K_0(\sqrt{\lambda_k}\delta)}{I_0(\sqrt{\lambda_k}\delta)}\sim -\log(\sqrt{\lambda_k}\delta)\,.\]

Using the asymptotics \eqref{eq:asymptotiqueI}, \eqref{eq:asymptotiqueK}, we have
\[\frac{K_j(\sqrt{\lambda_k}\delta)}{I_j(\sqrt{\lambda_k}\delta)}\sim \frac{1}{2}\Gamma(j)\Gamma(j+1)\bigg(\frac{1}{2}\sqrt{\lambda_k}\delta\bigg)^{-2j}\,.\]

Thus, with the recurrence relations \eqref{eq:deriveI0}, \eqref{eq:deriveK0},
\begin{gather*}
\begin{aligned}
    \sigma_{k,0}^D&=\frac{\sqrt{\lambda_k}\big(\frac{K_0(\sqrt{\lambda_k}\delta)}{I_0(\sqrt{\lambda_k}\delta)}I_0'(\sqrt{\lambda_k}\varepsilon)-K_0'(\sqrt{\lambda_k}\varepsilon)\big)}{K_0(\sqrt{\lambda_k}\varepsilon)-\frac{K_0(\sqrt{\lambda_k}\delta)}{I_0(\sqrt{\lambda_k}\delta)}I_0(\sqrt{\lambda_k}\varepsilon)}\\
    &\sim \frac{\sqrt{\lambda_k}(-\log(\sqrt{\lambda_k}\delta)I_1(\sqrt{\lambda_k}\varepsilon)+K_1(\sqrt{\lambda_k}\varepsilon))}{K_0(\sqrt{\lambda_k}\varepsilon)+\log(\sqrt{\lambda_k}\delta)I_0(\sqrt{\lambda_k}\varepsilon)}\\
    &\sim \frac{1-\frac{1}{2}\log(\sqrt{\lambda_k}\delta)(\sqrt{\lambda_k}\varepsilon)^2}{\varepsilon\log(\delta/\varepsilon)}\sim\frac{1}{\varepsilon|\log\varepsilon|}\,.
\end{aligned}
\end{gather*}

With the recurrence relations \eqref{eq:deriveI}, \eqref{eq:deriveK}, 
\begin{gather*}
\begin{aligned}
    \sigma_{k,(j)}^D&=\frac{\sqrt{\lambda_k}\big(\frac{K_j(\sqrt{\lambda_k}\delta)}{I_j(\sqrt{\lambda_k}\delta)}I_j'(\sqrt{\lambda_k}\varepsilon)-K_j'(\sqrt{\lambda_k}\varepsilon)\big)}{K_j(\sqrt{\lambda_k}\varepsilon)-\frac{K_j(\sqrt{\lambda_k}\delta)}{I_j(\sqrt{\lambda_k}\delta)}I_j(\sqrt{\lambda_k}\varepsilon)}\\
    &\sim \frac{\sqrt{\lambda_k}\Gamma(j+1)4^{-1}((\frac{1}{2}\sqrt{\lambda_k}\varepsilon)^{-j-1}+(\frac{1}{2}\sqrt{\lambda_k}\delta)^{-2j}(\frac{1}{2}\sqrt{\lambda_k}\varepsilon)^{j-1})}{2^{-1}\Gamma(j)(\frac{1}{2}\sqrt{\lambda_k}\varepsilon)^{-j}(1-(\frac{1}{2}\sqrt{\lambda_k}\delta)^{-2j}(\frac{1}{2}\sqrt{\lambda_k}\varepsilon)^{2j})}\\
    &=\frac{j(1+(\frac{1}{2}\sqrt{\lambda_k}\delta)^{-2j}(\frac{1}{2}\sqrt{\lambda_k}\varepsilon)^{2j})}{\varepsilon(1-(\frac{1}{2}\sqrt{\lambda_k}\delta)^{-2j}(\frac{1}{2}\sqrt{\lambda_k}\varepsilon)^{2j})}\sim\frac{j}{\varepsilon}\,.
\end{aligned}
\end{gather*}

\textbf{Case 2: $n\neq m-2$}
\newline If $k=j=0$, 
\[r^2R''+(m-n-1)rR'\,,\]
whose solution is $R(r)=a+br^{2+n-m}$.
With the condition $R(\delta)=0$, we obtain $a=-b\delta^{2+n-m}$.
With the condition $-R'(\varepsilon)=\sigma^D R(\varepsilon)$, we obtain
\[\sigma^D_{0,0}=\frac{m-n-2}{\varepsilon(1-\delta^{2+n-m}\varepsilon^{m-n-2})}\sim\frac{m-n-2}{\varepsilon}\,.\]

If $k=0$ and $j\neq0$,we have
\[r^2R''+(m-n-1)rR'-\mu_{(j)}R=0\,,\]
whose solution is $R(r)=ar^j+br^{n+2-m-j}$.
With the condition $R(\delta)=0$, we obtain $a=-b\delta^{n+2-m-2j}$.
With the condition $-R'(\varepsilon)=\sigma^DR(\varepsilon)$, we obtain
\[\sigma_{0,(j)}^D=\frac{m-n-2+j+j\delta^{n+2-m-2j}\varepsilon^{m-n-2+2j}}{\varepsilon(1-\delta^{n+2-m-2j}\varepsilon^{m-n-2+2j})}\sim\frac{m-n-2+j}{\varepsilon}\,.\]
Since the multiplicity of $\mu_{(j)}$ is $m_j$, the multiplicity of $\sigma^D_{0,(j)}$ is also $m_j$.

If $k\neq0$ and $j$ is arbitrary, suppose $R(r)=r^s\eta(r)$ with $\ell=\frac{2+n-m}{2}$. Then, we transform the differential equation in $R$ to the differential equation
\[r^2\eta''+r\eta'-(r^2\lambda_k+\nu^2)\eta=0\,,\]
with $\nu=\frac{m-n-2+2j}{2}$. We know that $\eta(r)=aI_\nu(\sqrt{\lambda_k}r)+bK_\nu(\sqrt{\lambda_k}r)$.
So, $R(r)=r^\ell(aI_\nu(\sqrt{\lambda_k}r)+bK_\nu(\sqrt{\lambda_k}r))$.
With the condition $R(\delta)=0$, we obtain $a=-b\frac{K_\nu(\sqrt{\lambda_k}\delta)}{I_\nu(\sqrt{\lambda_k}\delta)}$.
Thus,
\[R'(r)=b\ell r^{\ell-1}\bigg(K_\nu(\sqrt{\lambda_k}r))-\frac{K_\nu(\sqrt{\lambda_k}\delta)}{I_\nu(\sqrt{\lambda_k}\delta)}I_\nu(\sqrt{\lambda_k}r)\bigg)+b\sqrt{\lambda_k}r^\ell\bigg(K'_\nu(\sqrt{\lambda_k}r))-\frac{K_\nu(\sqrt{\lambda_k}\delta)}{I_\nu(\sqrt{\lambda_k}\delta)}I'_\nu(\sqrt{\lambda_k}r)\bigg)\,.\]
With the recurrence relations \eqref{eq:deriveI}, \eqref{eq:deriveK} and the asymptotics \eqref{eq:asymptotiqueI}, \eqref{eq:asymptotiqueK}, we have
\begin{gather*}
    \frac{K_\nu(\sqrt{\lambda_k}\delta)}{I_\nu(\sqrt{\lambda_k}\delta)}\sim\frac{1}{2}\Gamma(\nu)\Gamma(\nu+1)\bigg(\frac{1}{2}\sqrt{\lambda_k}\delta\bigg)^{-2\nu}\,,\\
    R(\varepsilon)\sim\frac{\Gamma(\nu)\varepsilon^\ell(\frac{1}{2}\sqrt{\lambda_k}\varepsilon)^{-\nu}}{2}\bigg(1-\bigg(\frac{1}{2}\sqrt{\lambda_k}\delta\bigg)^{-2\nu}\bigg(\frac{1}{2}\sqrt{\lambda_k}\varepsilon\bigg)^{2\nu}\bigg)\,,
\end{gather*}
\begin{multline*}
     R'(\varepsilon)\sim\frac{\Gamma(\nu)\varepsilon^{\ell-1}(\frac{1}{2}\sqrt{\lambda_k}\varepsilon)^{-\nu}}{2}\bigg[\nu\bigg(1+\bigg(\frac{1}{2}\sqrt{\lambda_k}\delta\bigg)^{-2\nu}\bigg(\frac{1}{2}\sqrt{\lambda_k}\varepsilon\bigg)^{2\nu}\bigg)
     \\-\ell\bigg(1-\bigg(\frac{1}{2}\sqrt{\lambda_k}\delta\bigg)^{-2\nu}\bigg(\frac{1}{2}\sqrt{\lambda_k}\varepsilon\bigg)^{2\nu}\bigg)\bigg]\,.
\end{multline*}
Thus,
\[\sigma_{k,(j)}^D\sim\frac{\nu-\ell}{\varepsilon}=\frac{m-n-2+j}{\varepsilon}\,.\]
Since the multiplicity of $\mu_{(j)}$ is $m_j$, the multiplicity of $\sigma^D_{k,(j)}$ is also $m_j$.
\end{proof}

\subsection{Steklov-Neumann problem on products}

We will study the Steklov-Neumann problem on the manifold $N\times\lbrack\varepsilon,\delta\rbrack\times\s^{m-n-1}$ equipped with the Riemannian metric 
\[h\oplus dr^2\oplus r^2g_0\,.\]

\begin{lemme}\label{lemme:neumann}
The spectrum of the problem
\begin{equation}\label{eq:steklov_neumann}
    \begin{cases}
    \Delta u=0 & \mbox{ in } N\times (\varepsilon,\delta)\times \s^{m-n-1}\,,\\
    \partial_n u=\sigma^N u &\mbox{ on } N\times\{\varepsilon\}\times \s^{m-n-1}\,,\\
    \partial_n u=0&\mbox{ on }N\times\{\delta\}\times \s^{m-n-1}\,,
    \end{cases}
\end{equation}
is given by
\[\bigsqcup_{k\geq0}\sigma(\Lambda^{\lambda_k,N})\,,\]
where $\lambda_k$ is the $k$-th eigenvalue of the Laplacian on $N$ and $\sigma(\Lambda^{\lambda_k,N})$ is the spectrum of the operator $\Lambda^{\lambda_k,N}:C^\infty(\{\varepsilon\}\times \s^{m-n-1})\to C^\infty(\{\varepsilon\}\times \s^{m-n-1})$ defined by
\[\Lambda^{\lambda_k,N}(g):=\partial_n G\,,\]
where $G$ is the unique solution to the problem
\begin{equation}\label{eq:problemeDN2}
\begin{cases}
\Delta G= \lambda_k G & \mbox{ in $(\varepsilon,\delta)\times \s^{m-n-1}$}\,,\\
G=g & \mbox{ sur $\{\varepsilon\}\times \s^{m-n-1}$}\,,\\
\partial_n G=0 & \mbox{ sur $\{\delta\}\times \s^{m-n-1}$}\,.
\end{cases}
\end{equation}
\end{lemme}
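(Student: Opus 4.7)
The proof of Lemma~\ref{lemme:neumann} runs completely in parallel with that of Lemma~\ref{lemme:dirichlet}, the only change being the boundary condition at $\{\delta\}\times\s^{m-n-1}$. The overall plan is: first justify that $\Lambda^{\lambda_k,N}$ is well-defined by proving existence and uniqueness for the auxiliary problem~\eqref{eq:problemeDN2}, then perform separation of variables to identify the spectrum, and finally invoke completeness of the Laplace eigenbasis on $N$ to assemble the full spectrum as a disjoint union.

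For the uniqueness step (the analogue of Remark~\ref{rem:dirichlet}), I would look at the homogeneous version of~\eqref{eq:problemeDN2}: this is the mixed Dirichlet-Neumann eigenvalue problem on the annular region $(\varepsilon,\delta)\times\s^{m-n-1}$, with Dirichlet condition at $r=\varepsilon$ and Neumann at $r=\delta$. Its spectrum is strictly positive (the Dirichlet condition at $r=\varepsilon$ rules out the constant $k=0$ mode that a pure Neumann problem would carry). Since $\lambda_k\geq 0$, the value $-\lambda_k\leq 0$ lies outside this spectrum, and so the auxiliary problem~\eqref{eq:problemeDN2} has a unique solution for each $k$, making $\Lambda^{\lambda_k,N}$ well-defined.

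Next, I would apply separation of variables as in Lemma~\ref{lemme:dirichlet}. Writing $u(r,p,q)=F(p)G(r,q)$ and using the expression
\[\Delta u=\Delta_N u+\frac{m-n-1}{r}\partial_r u+\partial_{rr}u+\frac{1}{r^2}\Delta_{\s^{m-n-1}}u\]
for the Laplace–Beltrami operator with respect to the metric $h\oplus dr^2\oplus r^2 g_0$, the equation $\Delta u=0$ separates as
\[\frac{-\Delta_N F}{F}=\frac{\frac{m-n-1}{r}\partial_r G+\partial_{rr}G+\frac{1}{r^2}\Delta_{\s^{m-n-1}}G}{G}=\lambda,\]
for some $\lambda\in\R$. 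The first equation forces $\lambda=\lambda_k$ for some $k\geq 0$, with associated $L^2(N)$-orthonormal Laplace eigenfunction $F_k$. The second equation, together with the boundary conditions $\partial_n G=\sigma^N G$ on $\{\varepsilon\}\times\s^{m-n-1}$ and $\partial_n G=0$ on $\{\delta\}\times\s^{m-n-1}$ inherited from~\eqref{eq:steklov_neumann}, is precisely the spectral problem for $\Lambda^{\lambda_k,N}$. Completeness of the decomposition then follows: expanding any $L^2$ boundary trace on $N\times\{\varepsilon\}\times\s^{m-n-1}$ in the orthonormal basis $\{F_k\}_{k\geq 0}$ of $L^2(N)$, the unique harmonic extension on $N\times(\varepsilon,\delta)\times\s^{m-n-1}$ assembles into a sum $\sum_k F_k(p)G_k(r,q)$, which yields the decomposition of the spectrum as $\bigsqcup_{k\geq 0}\sigma(\Lambda^{\lambda_k,N})$.

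The only potentially delicate point is the uniqueness step at $k=0$, where $\lambda_0=0$ is itself a Neumann eigenvalue on $(\varepsilon,\delta)\times\s^{m-n-1}$; however, the Dirichlet boundary at $r=\varepsilon$ imposed in the homogeneous version of~\eqref{eq:problemeDN2} breaks this degeneracy, so there is no real obstacle. Once this is addressed, every other step is a verbatim repetition of the Steklov-Dirichlet argument.
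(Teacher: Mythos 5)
Your proof is correct and follows essentially the same route as the paper: separation of variables reducing problem~\eqref{eq:steklov_neumann} to the family of operators $\Lambda^{\lambda_k,N}$, with well-definedness secured because $-\lambda_k\leq 0$ cannot be an eigenvalue of the mixed Dirichlet--Neumann problem on the annulus. Your handling of uniqueness and completeness is merely more explicit than the paper's, which simply refers back to Remark~\ref{rem:dirichlet} and asserts the conclusion.
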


\begin{remarque}\label{rem:neumann}
There exists a unique solution to problem \eqref{eq:problemeDN2} for the same reason given in Remark \ref{rem:dirichlet}.
\end{remarque}

\begin{proof}
Using the same method as in Lemma \ref{lemme:dirichlet}, we need to solve
\[\begin{cases}
\frac{m-n-1}{r}\partial_r G+\partial_{rr}G+\frac{1}{r^2}\Delta_{\s^{m-n-1}}G= \lambda_k G & \mbox{ in $(\varepsilon,\delta)\times\s^{m-n-1}$}\,,\\
\partial_n G=\sigma^D G & \mbox{ on $\{\varepsilon\}\times\s^{m-n-1}$}\,,\\
\partial_n G=0 & \mbox{ on $\{\delta\}\times\s^{m-n-1}$}\,,
\end{cases}\]
for every $k\geq0$.
This is the spectral problem associated to the operator $\Lambda^{\lambda_k,N}$. 
It then becomes clear that the spectrum of problem \eqref{eq:steklov_neumann} is given by
\[\bigsqcup_{k\geq0}\sigma(\Lambda^{\lambda_k,N})\,.\]
\end{proof}

\begin{lemme}\label{lemme:asymptotiqueneumann}
We have the following asymptotics for the distinct eigenvalues of problem \eqref{eq:problemeDN2}:
\newline If $n=m-2$,
\begin{gather*}
    \sigma_{0,0}^N=0\,,\\
    \sigma_{k,(j)}^N\sim\frac{j}{\varepsilon}\mbox{ if $k=0$ and $j\neq0$ or if $k\,,j\neq 0$}\,,\\
    \sigma_{k,0}^N\sim\frac{1}{\varepsilon\bigg(|\log(\sqrt{\lambda_k\varepsilon})|-\frac{K_0'(\sqrt{\lambda_k}\delta)}{I_0'(\sqrt{\lambda_k}\delta)}\bigg)}\mbox{ if $k\neq0$}\,.
\end{gather*}
If $n\neq m-2$,
\begin{gather*}
    \sigma_{0,0}^N=0\,,\\
    \sigma_{k,(j)}^N\sim\frac{m-n-2+j}{\varepsilon} \mbox{if $k=0$ and $j\neq0$ or if $k\neq0$ and $j\geq0$}\,.
\end{gather*}
\end{lemme}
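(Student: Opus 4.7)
My plan is to follow exactly the scheme of the proof of Lemma~\ref{lemme:asimptotiquedirichlet}, adapting each step to the Neumann condition at $r=\delta$. After writing $G(r,q)=R(r)\phi(q)$ and decomposing $\phi$ in spherical harmonics on $\s^{m-n-1}$ with eigenvalues $\mu_j=j(j+m-n-2)$, the radial function $R$ must solve
\[
  r^2 R''+(m-n-1)r R'-(r^2\lambda_k+\mu_j)R=0,
\]
subject to $-R'(\varepsilon)=\sigma^N R(\varepsilon)$ and $R'(\delta)=0$. I would then split the analysis by whether $k=0$ or $k\neq 0$ and by whether $n=m-2$, just as in the Dirichlet case.

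For $k=0$ the ODE is of Euler type. In the constant mode $j=0$, one obtains $R(r)=a+b\log r$ if $n=m-2$ and $R(r)=a+br^{2+n-m}$ otherwise; in both cases the Neumann condition at $r=\delta$ forces $b=0$, whence $R$ is constant and $\sigma_{0,0}^N=0$. In the modes $k=0$, $j\neq 0$, one inserts $R(r)=ar^\alpha+br^\beta$ (with $\alpha=j$, $\beta=-j$ or $\beta=n+2-m-j$), uses the Neumann condition at $\delta$ to fix one coefficient, and expands $-R'(\varepsilon)/R(\varepsilon)$ as $\varepsilon\to 0$; this is a direct computation yielding the stated $(m-n-2+j)/\varepsilon$ (or $j/\varepsilon$ when $n=m-2$).

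For $k\neq 0$ the substitution $R(r)=r^l\eta(r)$ with $l=(2+n-m)/2$ turns the ODE into the modified Bessel equation of order $\nu=(m-n-2+2j)/2$, so $R(r)=r^l(aI_\nu(\sqrt{\lambda_k}r)+bK_\nu(\sqrt{\lambda_k}r))$. The Neumann condition at $r=\delta$ determines the ratio $a/b=-K_\nu'(\sqrt{\lambda_k}\delta)/I_\nu'(\sqrt{\lambda_k}\delta)$, which is a fixed finite constant as $\varepsilon\to 0$. When $\nu>0$, applying~\eqref{eq:asymptotiqueI}--\eqref{eq:asymptotiqueK} and the recurrences~\eqref{eq:deriveI}--\eqref{eq:deriveK} shows that the $K_\nu$-part dominates both $R(\varepsilon)$ and $R'(\varepsilon)$, so the ratio $-R'(\varepsilon)/R(\varepsilon)$ is asymptotically governed by $-K_\nu'(\sqrt{\lambda_k}\varepsilon)/K_\nu(\sqrt{\lambda_k}\varepsilon)\sim \nu/\varepsilon$, producing $(\nu-l)/\varepsilon=(m-n-2+j)/\varepsilon$ (or $j/\varepsilon$ when $n=m-2$, since then $l=0$).

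The one delicate case, which I expect to be the main obstacle, is $n=m-2$, $k\neq 0$, $j=0$. Here $\nu=l=0$, so $K_0(\sqrt{\lambda_k}\varepsilon)\sim -\log(\sqrt{\lambda_k}\varepsilon)$ is only logarithmically large while $I_0(\sqrt{\lambda_k}\varepsilon)\to 1$, and the coefficient ratio $-K_0'(\sqrt{\lambda_k}\delta)/I_0'(\sqrt{\lambda_k}\delta)$ remains $O(1)$ rather than being logarithmically large as it was in the Dirichlet case. One must therefore retain both contributions in
\[
  R(\varepsilon)\sim b\bigl(\,|\log(\sqrt{\lambda_k}\varepsilon)|-K_0'(\sqrt{\lambda_k}\delta)/I_0'(\sqrt{\lambda_k}\delta)\bigr),
\]
while $R'(\varepsilon)\sim -b/\varepsilon$ (the $K_0'=-K_1$ term dominating via~\eqref{eq:deriveK0} and~\eqref{eq:asymptotiqueK}); taking the ratio gives exactly the stated asymptotic. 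The subtlety is that, unlike in the Dirichlet setting where $K_0(\sqrt{\lambda_k}\delta)/I_0(\sqrt{\lambda_k}\delta)\sim -\log(\sqrt{\lambda_k}\delta)$ combined with $-\log\varepsilon$ to give the clean $|\log(\delta/\varepsilon)|\sim|\log\varepsilon|$, here the constant $K_0'/I_0'$ does not combine with $|\log\varepsilon|$ in any simplifying way and must be kept explicitly in the asymptotic formula.
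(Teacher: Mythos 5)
Your proposal follows the paper's proof essentially verbatim: same separation of variables and case split ($k=0$ versus $k\neq0$, $n=m-2$ versus $n\neq m-2$), same Bessel substitution and small-argument asymptotics, and you correctly isolate the delicate case $n=m-2$, $k\neq0$, $j=0$, where both the $|\log(\sqrt{\lambda_k}\varepsilon)|$ term and the $O(1)$ constant $K_0'(\sqrt{\lambda_k}\delta)/I_0'(\sqrt{\lambda_k}\delta)$ must be retained in $R(\varepsilon)$. One small slip: for $n\neq m-2$ and $k\neq0$ the Neumann condition at $\delta$ applies to $R=r^l\eta$, not to $\eta$, so the coefficient ratio is $b/a=-\bigl(lI_\nu(\sqrt{\lambda_k}\delta)+\sqrt{\lambda_k}\delta I_\nu'(\sqrt{\lambda_k}\delta)\bigr)/\bigl(lK_\nu(\sqrt{\lambda_k}\delta)+\sqrt{\lambda_k}\delta K_\nu'(\sqrt{\lambda_k}\delta)\bigr)$ (the paper's constant $\mathcal{C}$) rather than $-I_\nu'/K_\nu'$ --- but since this is still a fixed, $\varepsilon$-independent constant with $b\neq0$, your dominance argument and the conclusion $(\nu-l)/\varepsilon=(m-n-2+j)/\varepsilon$ are unaffected.
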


\begin{proof}
Just like the beginning of the proof of Lemma \ref{lemme:asimptotiquedirichlet}, we need to solve the problem
\begin{equation*}
    \begin{cases}
    r^2R''+(m-n-1)rR'-(r^2\lambda_k+\mu_{(j)})R=0\,,\\
    R'(\delta)=0\,,\\
    -R'(\varepsilon)=\sigma^N R(\varepsilon)\,.
    \end{cases}
\end{equation*}

Let us consider two cases:
\begin{enumerate}
    \item $n=m-2$\,,
    \item $n\neq m-2$.
\end{enumerate}

\textbf{Case 1: $n=m-2$}
\newline If $k=j=0$, we have
\[r^2R''+rR'=0\,,\]
whose solution is $R(r)=a+b\log r$.
With the condition condition $R'(\delta)=0$, we obtain $b=0$. 
With the condition $-R'(\varepsilon)=\sigma^N R(\varepsilon)$, we obtain
\[\sigma_{0,0}^N=0\,.\]

If $k=0$ and $j\neq0$, we have
\[r^2R''+rR'-\mu_{(j)}R=0\,,\]
whose solution is $R(r)=ar^j+br^{-j}$.
With the condition $R'(\delta)=0$, we obtain $a=b\delta^{-2m}$. 
with the condition $-R'(\varepsilon)=\sigma^N R(\varepsilon)$, we obtain
\[\sigma_{0,(j)}^N=\frac{j(1-\delta^{-2j}\varepsilon^{2j})}{\varepsilon(1+\delta^{-2j}\varepsilon^{2j})}\sim\frac{j}{\varepsilon}\,.\]
Since the multiplicity of $\mu_{(j)}$ is $m_j$, the multiplicity of $\sigma^N_{0,(j)}$ is also $m_j$.

If $k\neq0$ and $j$ is arbitrary,we have
\[r^2R''+rR'-(r^2\sqrt{\lambda_k}+\mu_{(j)})R=0\,,\]
whose solution is $R(r)=aI_j(\sqrt{\lambda_k}r)+bK_j(\sqrt{\lambda_k}r)$.
With the condition $R'(\delta)=0$, we obtain $a=-b\frac{K_j'(\sqrt{\lambda_k}\delta)}{I_j'(\sqrt{\lambda_k}\delta)}$. 
With the condtion $-R'(\varepsilon)=\sigma^N R(\varepsilon)$, we obtain
\[\sigma_{k,(j)}^N=\frac{\sqrt{\lambda_k}\bigg(\frac{K_j'(\sqrt{\lambda_k}\delta)}{I_j'(\sqrt{\lambda_k}\delta)}I_j'(\sqrt{\lambda_k}\varepsilon)-K_j'(\sqrt{\lambda_k}\varepsilon)\bigg)}{K_j(\sqrt{\lambda_k}\varepsilon)-\frac{K_j'(\sqrt{\lambda_k}\delta)}{I_j'(\sqrt{\lambda_k}\delta)}I_j(\sqrt{\lambda_k}\varepsilon)}\,.\]
Since the multiplicity of $\mu_{(j)}$ is $m_j$, the multiplicity of $\sigma^N_{k,(j)}$ is also $m_j$.
For $j=0$, using  the recurrence relations \eqref{eq:deriveI0}, \eqref{eq:asymptotiqueK0} followed by the asymptotics \eqref{eq:asymptotiqueK0}, \eqref{eq:asymptotiqueI}, \eqref{eq:asymptotiqueI}, \eqref{eq:asymptotiqueK}, we have
\[\sigma_{k,0}^N\sim\frac{\frac{1}{2}\sqrt{\lambda_k}(1+\frac{K'_0(\sqrt{\lambda_k}\delta)}{I'_0(\sqrt{\lambda_k}\delta)}(\frac{1}{2}\sqrt{\lambda_k}\varepsilon)^2)}{\frac{1}{2}\sqrt{\lambda_k}\eps\bigg(|\log(\sqrt{\lambda_k}\eps)|-\frac{K'_0(\sqrt{\lambda_k}\delta)}{I'_0(\sqrt{\lambda_k}\delta)}\bigg)}\sim\frac{1}{\eps\bigg(|\log(\sqrt{\lambda_k}\eps)|-\frac{K'_0(\sqrt{\lambda_k}\delta)}{I'_0(\sqrt{\lambda_k}\delta)}\bigg)}\,.\]

Using the recurrence relations \eqref{eq:deriveI}, \eqref{eq:asymptotiqueK} followed by the asymptotics \eqref{eq:asymptotiqueK}, \eqref{eq:asymptotiqueI}, we have
\[\frac{K_j'(\sqrt{\lambda_k}\delta)}{I_j'(\sqrt{\lambda_k}\delta)}\sim -\frac{1}{2}\Gamma(j)\Gamma(j+1)\bigg(\frac{1}{2}\sqrt{\lambda_k}\delta\bigg)^{-2j}\,.\]

Thus,
\[\sigma_{k,(j)}^N\sim\frac{j(1-(\frac{1}{2}\sqrt{\lambda_k}\delta)^{-2j}(\frac{1}{2}\sqrt{\lambda_k}\varepsilon)^{2j})}{\varepsilon(1+(\frac{1}{2}\sqrt{\lambda_k}\delta)^{-2j}(\frac{1}{2}\sqrt{\lambda_k}\varepsilon)^{2j})}\sim\frac{j}{\varepsilon}\,.\]

\textbf{Case 2: $n\neq m-2$}
\newline If $k=j=0$, we have
\[r^2R''+(m-n-1)R'=0\,,\]
whose solution is $R(r)=a+br^{2+n-m}$.
With the condition $R'(\delta)=0$, we obtain $b=0$.
With the condition $-R'(\varepsilon)=\sigma^N R(\varepsilon)$, we obtain
\[\sigma_{0,0}^N=0\,.\]

If $k=0$ and $j\neq 0$, we have
\[r^2R''+(m-n-1)rR'-\mu_{(j)}R=0\,,\]
whose solution is $R(r)=ar^j+br^{n+2-m-j}$.
With the condition $R'(\delta)=0$, we obtain $a=-b\frac{n+2-m-j}{j}\delta^{-2m}$. 
with the condition $-R'(\varepsilon)=\sigma^N R(\varepsilon)$, we obtain
\[\sigma_{0,(j)}^N=\frac{j((m-2-n+j+(n+2-m-j)\delta^{2+n-m-2j}\varepsilon^{m-n-2+2j})}{\varepsilon(j-(n+2-m-j)\delta^{2+n-m-2j}\varepsilon^{m-n-2+2j})}\sim\frac{m-2-n+j}{\varepsilon}\,.\]
Since the multiplicity of $\mu_{(j)}$ is $m_j$, the multiplicity of $\sigma^N_{0,(j)}$ is also $m_j$.

If $k\neq0$ and $j$ is arbitrary, we have
\[r^2R''+(m-n-1)rR'-(r^2\lambda_k+\mu_{(j)})R=0\,,\]
whose solution is $R(r)=r^\ell(aI_\nu(\sqrt{\lambda_k}r)+bK_\nu(\sqrt{\lambda_k}r))$, with $\ell=\frac{2+n-m}{2}$ and $\nu=\frac{m-n-2+2j}{2}$ (this solution is obtained like in the proof of Lemma \ref{lemme:asimptotiquedirichlet}).

With the condition $R'(\delta)=0$, we obtain $b=-a\mathcal{C}$, with
\[\mathcal{C}=\frac{\ell I_\nu(\sqrt{\lambda_k}\delta)+\sqrt{\lambda_k}\delta I'_\nu(\sqrt{\lambda_k}\delta)}{\ell K_\nu(\sqrt{\lambda_k}\delta)+\sqrt{\lambda_k}\delta K'_\nu(\sqrt{\lambda_k}\delta)}\,.\]
With the condition $-R'(\varepsilon)=\sigma^N R(\varepsilon)$, we obtain
\begin{gather*}
    \sigma^N =\frac{\ell\varepsilon^\ell(\mathcal{C}K_\nu(\sqrt{\lambda_k}\varepsilon)-I_\nu(\sqrt{\lambda_k}\varepsilon)+\varepsilon^\ell\sqrt{\lambda_k}(\mathcal{C}K'_\nu(\sqrt{\lambda_k}\varepsilon)-I'_\nu(\sqrt{\lambda_k}\varepsilon))}{\varepsilon^\ell(I_\nu(\sqrt{\lambda_k}\varepsilon)-\mathcal{C}K_\nu(\sqrt{\lambda_k}\varepsilon))}\\
    =-\frac{\ell}{\varepsilon}+\sqrt{\lambda_k}\frac{\mathcal{C}K'_\nu(\sqrt{\lambda_k}\varepsilon)-I'_\nu(\sqrt{\lambda_k}\varepsilon)}{I_\nu(\sqrt{\lambda_k}\varepsilon)-\mathcal{C}K_\nu(\sqrt{\lambda_k}\varepsilon)}\,.
\end{gather*}
With the recurrence relations \eqref{eq:deriveI}, \eqref{eq:deriveK} and the asymptotics \eqref{eq:asymptotiqueI}, \eqref{eq:asymptotiqueK}, 
\begin{gather*}
    \mathcal{C}\sim\frac{2j}{2+n-m-j}\Gamma(\nu)^{-1}\Gamma(\nu+1)^{-1}\bigg(\frac{1}{2}\sqrt{\lambda_k}\delta\bigg)^{2\nu}\,,\\
    \mathcal{C}K'_\nu(\sqrt{\lambda_k}\varepsilon)-I'_\nu(\sqrt{\lambda_k})\varepsilon\sim \frac{-\sqrt{\lambda_k}\bigg(\frac{1}{2}\sqrt{\lambda_k}\delta\bigg)^{2\nu}\bigg(\frac{1}{2}\sqrt{\lambda_k}\varepsilon\bigg)^{-\nu-1}}{2\Gamma(\nu)}\bigg(\frac{j}{2+n-m-j}-\bigg(\frac{1}{2}\sqrt{\lambda_k}\delta\bigg)^{-2\nu}\bigg(\frac{1}{2}\sqrt{\lambda_k}\varepsilon\bigg)^{2\nu}\bigg)\,,\\
    I_\nu(\sqrt{\lambda_k}\varepsilon)-\mathcal{C}K_\nu(\sqrt{\lambda_k}\varepsilon)\sim \frac{\bigg(\frac{1}{2}\sqrt{\lambda_k}\delta\bigg)^{2\nu}\bigg(\frac{1}{2}\sqrt{\lambda_k}\varepsilon\bigg)^{-\nu}}{\Gamma(\nu+1)}\bigg(\bigg(\frac{1}{2}\sqrt{\lambda_k}\delta\bigg)^{-2\nu}\bigg(\frac{1}{2}\sqrt{\lambda_k}\varepsilon\bigg)^{2\nu}-\frac{j}{2+n-m-j}\bigg)\,.
\end{gather*}
So,
\[\sigma_{k,(j)}^N\sim\frac{m-n-2+j}{\varepsilon}\,.\]
Since the multiplicity of $\mu_{(j)}$ is $m_j$, the multiplicity of $\sigma^N_{k,(j)}$ is also $m_j$.
\end{proof}

\section{Tubular excision of closed Riemannian manifold}\label{section:main}

We are now ready to prove Theorem \ref{thm:PrincipalIntro}. Let us recall that we need to show that, for all $k\,,\ell\geq 0$, except the case $k=\ell=0$,
\begin{gather*}
  \lim\limits_{\varepsilon\to0}\varepsilon\sigma_{k,\ell}(\Omega_\varepsilon)=m-n-2+j\,.
\end{gather*}
where $j\geq0$ is such that $j=\ell=0$ or $m_0+\cdots+m_{j-1}\leq\ell< m_0+\cdots+m_{j-1}+m_j$.
In particular, for $n=m-2$ and $\ell=0$, this limit is 0. In that case, the following improvement holds for each $k>0$,
\[\lim\limits_{\varepsilon\to0}\varepsilon|\log\varepsilon|\sigma_{k,0}(\Omega_\varepsilon)=1\,.\]

\begin{proof}[Proof of Theorem \ref{thm:PrincipalIntro}]
Let $\varepsilon_0>0$. By Proposition \ref{prop:quasi-iso}, there exists $\delta=\delta(\eps_0)>0$ such that $g$ and $\Tilde{g}$ are quasi-isometric with constant $1+\varepsilon_0$ on $\{p\in M~|~d_g(p,N)<\delta\}$. 
 
Let $0<\varepsilon<\delta$. Consider the Steklov problem on $\Omega_\varepsilon:=M\backslash\{p\in M~|~d_g(p,N)<\varepsilon\}$.
 
With the bracketing of Stekov eigenvalues with $A:=\{p\in M~|~\varepsilon<d_g(p,N)<\delta\}\subset M_\varepsilon$, we have that, for all $i\geq0$
\[\sigma_i^N(A,g)\leq\sigma_i(\Omega_\varepsilon,g)\leq\sigma_{i+1}^D(A,g)\,.\]
Since the spherical coordinates are well defined on $A_\varepsilon$, we can write $\Tilde{g}=h\oplus dr^2\oplus  r^2g_0$ and, by quasi-isometry,
\begin{gather*}
    \sigma_i^N(A,g)\geq\frac{\sigma_i^N((\varepsilon,\delta)\times N\times\s^{m-n-1},\Tilde{g})}{(1+\varepsilon_0)^{m+1/2}}\,,\\
    \sigma_{i+1}^D(A,g)\leq (1+\varepsilon_0)^{m+1/2}\sigma_{i+1}^D((\varepsilon,\delta)\times N\times\s^{m-n-1},\Tilde{g})\,.
\end{gather*}

Then,
\[\lim\limits_{\varepsilon\to0}\varepsilon\sigma_{k,\ell}(\Omega_\varepsilon,g)\leq\lim\limits_{\varepsilon\to0}\varepsilon\sigma_{k,\ell}^D(A,g)\leq\lim\limits_{\varepsilon\to0}\varepsilon(1+\varepsilon_0)^{2m+1}\sigma_{k,\ell}^D\,.\]
By Lemma \ref{lemme:asimptotiquedirichlet}, if $n=m-2$, we have

\begin{gather*}
    \varepsilon\sigma_{k,0}^D\sim\frac{1}{|\log\varepsilon|} \mbox{ for all $k\geq0$}\,,\\
    \varepsilon\sigma_{k,(j)}^D\sim j \mbox{ for all $j>0$ and for all $k\geq0$}\,.
\end{gather*}
If $n\neq m-2$, we have
\[\varepsilon\sigma_{k,(j)}^D\sim m-n-2+j \mbox{ for all $j\geq0$ and for all $k\geq0$}\,.\]
For every $j\geq0$, the multiplicity of $\sigma^D_{k,(j)}$ is $m_j$, the multiplicity of the distinct $j-$th Laplace eigenvalue of $\s^{m-n-1}$.

Thus, if $\ell=0$, set $j=0$ and if $\ell>0$, choose the unique $j>0$ such that $m_0+\cdots+m_{j-1}\leq\ell< m_0+\cdots+m_{j-1}+m_j$. We then have, for each $k\geq0$,
\[\lim\limits_{\varepsilon\to0}\varepsilon\sigma_{k,\ell}(\Omega_\varepsilon,g)\leq (1+\varepsilon_0)^{2m+1}m-n-2+j\,,\]
except where $k=\ell=0$.
Since it is true for every $\varepsilon_0>0$, we take the limit as $\varepsilon_0\to0$ to obtain
\[\lim\limits_{\varepsilon\to0}\varepsilon\sigma_{k,\ell}(\Omega_\varepsilon,g)\leq m-n-2+j\,.\]

We also have
\[\lim\limits_{\varepsilon\to0}\varepsilon\sigma_{k,\ell}(\Omega_\varepsilon,g)\geq\lim\limits_{\varepsilon\to0}\varepsilon\sigma_{k,\ell}^N(A,g)\geq\lim\limits_{\varepsilon\to0}\frac{\varepsilon}{(1+\varepsilon_0)^{2m+1}}\sigma_{k,\ell}^N\,.\]
By Lemma \ref{lemme:asymptotiqueneumann}, if $n=m-2$, we have
\begin{gather*}
    \varepsilon\sigma_{k,(j)}^N\sim j \mbox{ if $j\neq 0$ and $k\geq 0$}\,,\\
    \varepsilon\sigma_{k,0}^N\sim\frac{1}{|\log(\sqrt{\lambda_k}\varepsilon)|-\frac{K_0'(\sqrt{\lambda_k}\delta)}{I_0'(\sqrt{\lambda_k}\delta)}}\mbox{ for all $k>0$}\,.
\end{gather*}
If $n\neq m-2$, we have
\[\varepsilon\sigma_{k,(j)}^N\sim m-n-2+j\,,\]
for every $k\,,j\geq0$ except when $k=j=0$.
For every $j\geq0$, the multiplicity of $\sigma^N_{k,(j)}$ is $m_j$, the multiplicity of the distinct $j-$th Laplace eigenvalue of $\s^{m-n-1}$.

Thus, if $\ell=0$, set $j=0$ and if $\ell>0$, choose the unique $j>0$ such that $m_0+\cdots+m_{j-1}\leq\ell< m_0+\cdots+m_{j-1}+m_j$. We then have, for each $k\geq0$,
\[\lim\limits_{\varepsilon\to0}\varepsilon\sigma_{k,\ell}(\Omega_\varepsilon,g)\geq\frac{m-n-2+j}{(1+\varepsilon_0)^{2m+1}}\,,\]
except when $k=\ell=0$.

Since it is true for every $\varepsilon_0>0$, we take the limit as $\varepsilon_0\to0$ to obtain
\[\lim\limits_{\varepsilon\to0}\varepsilon\sigma_{k,\ell}(\Omega_\varepsilon,g)\geq m-n-2+j\,.\]

Thus, if $\ell=0$, set $j=0$ and if $\ell>0$, choose the unique $j>0$ such that $m_0+\cdots+m_{j-1}\leq\ell< m_0+\cdots+m_{j-1}+m_j$. Then, the following limit holds
\[\lim\limits_{\varepsilon\to0}\varepsilon\sigma_{k,\ell}(\Omega_\varepsilon,g)=m-n-2+j\,,\]
except when $k=\ell=0$.

When $\ell=0$, $k>0$ and $n=m-2$, we can improve the limit. Indeed, we have
\[\lim\limits_{\varepsilon\to0}\varepsilon|\log\varepsilon|\sigma_{k,0}(\Omega_\varepsilon,g)\leq\lim\limits_{\varepsilon\to0}\varepsilon|\log\varepsilon|\sigma_{k,0}^D(A,g)\leq \lim\limits_{\varepsilon\to0}\varepsilon|\log\varepsilon|(1+\varepsilon_0)^{2m+1}\sigma_{k,0}^D\sim (1+\varepsilon_0)^{2m+1}\,.\]
Since it is true for every $\varepsilon_0>0$, we take the limit as $\varepsilon_0\to0$ to obtain
\[\lim\limits_{\varepsilon\to0}\varepsilon|\log\varepsilon|\sigma_{k,0}(\Omega_\varepsilon,g)\leq 1\,.\]
Similarly, 
\[\lim\limits_{\varepsilon\to0}\varepsilon|\log\varepsilon|\sigma_{k,0}(\Omega_\varepsilon,g)\geq\lim\limits_{\varepsilon\to0}\varepsilon|\log\varepsilon|\sigma_{k,0}^N(A,g)\geq\lim\limits_{\varepsilon\to0}\frac{\varepsilon|\log\varepsilon|}{(1+\varepsilon_0)^{2m+1}}\sigma_{k,0}^N\sim\frac{1}{(1+\varepsilon_0)^{2m+1}}\,.\]
Since it is true for every $\varepsilon_0>0$, we take the limit as $\varepsilon_0\to0$ to obtain
\[\lim\limits_{\varepsilon\to0}\varepsilon|\log\varepsilon|\sigma_{k,0}(\Omega_\varepsilon,g)\geq 1\,.\]

So,
\[\lim\limits_{\varepsilon\to0}\varepsilon|\log(\varepsilon)|\sigma_{k,0}(\Omega_\varepsilon,g)=1\,.\]
\end{proof}

In the next example, we show that the behavior of the spectrum for submanifolds of dimension $m-1$ is different than for a submanifold of dimension $n\leq m-2$.

\begin{exemple}
Let $\mathbb{T}^2$ be the flat $2-$torus and $\gamma$ be the curve $(0,y)\sim(1,y)$. Consider the domain $\Omega_\eps:=\mathbb{T}^2\backslash\gamma_\eps$, where $\gamma_\eps$ is  a tubular neighbourhood of width $\eps$ around $\gamma$. The domain $\Omega_\eps$ is isometric to the cylinder $\s^1\times\lbrack\eps,1-\eps\rbrack$.
The Steklov problem on $\Omega_\eps$ is
$$
\begin{cases}
\partial_{ss}u+\partial_{tt}u=0 &\mbox{ in $\s^1\times(\eps,1-\eps)$}\,,\\
-u_t(s,\eps)=\sigma u(s,\eps) & \mbox{ on $\s^1\times\{\eps\}$}\,,\\
u_t(s,1-\eps)=\sigma u(s,1-\eps) &\mbox{ on $\s^1\times\{1-\eps\}$}\,.
\end{cases}
$$
Using seperation of variables, we find that the Steklov eigenvalues are
\[0\,,\frac{2}{1-2\eps}\,,k\coth\bigg(\frac{k(1-2\eps)}{2}\bigg)\,,k\tanh\bigg(\frac{k(1-2\eps)}{2}\bigg)\,.\]
Taking the limit as $\eps\to0$, we obtain
\[0\,,2\,,k\coth\bigg(\frac{k}{2}\bigg)\,,k\tanh\bigg(\frac{k}{2}\bigg)\,.\]

\end{exemple}

Instead of proving Theorem \ref{thm:pointIntro}, let us prove a slightly different but equivalent result:
\begin{theoreme}\label{thm:point}
Let $M$ be a smooth compact Riemannian manifold of dimension $m\geq 2$ and $p\in M$. Then, if $j=0$, set $k=0$ and for $j>0$, choose the unique $k>0$ such $m_0+\cdots+m_{k-1}\leq j< m_0+\cdots+m_{k-1}+m_k$. The following limit holds 
\[\lim\limits_{\varepsilon\to0}\sigma_k(\Omega_\eps)|\partial\Omega_\eps|^{1/(m-1)}=(m+k-2)\omega_{m-1}^{1/(m-1)}\,,\]
where $\Omega_\eps:=M\setminus B(p,\eps)$ and $\omega_{m-1}=|\s^{m-1}|$.

\end{theoreme}
\begin{proof}
Let $\varepsilon_0>0$. By Remark \ref{remarque:quasi-isopoint}, there exists $\delta>0$ such that $\Tilde{g}$ and $g$ are quasi-isometric with constant $1+\varepsilon_0$ on $B_\delta(p)$.

Let $0<\varepsilon<\delta$ and consider the Steklov problem on $\Omega_\varepsilon=M\backslash B_\delta(p)$.
By the bracketing of Steklov eigenvalues with $A:=B_\delta(p)\backslash B_\varepsilon(p)$, we have
\[\sigma_i^N(A,g)\leq\sigma_i(\Omega_\varepsilon,g)\leq\sigma_{i+1}^D(A,g)\,.\]
Since the spherical coordinates are well defined on $A$, we can write $\Tilde{g}=dr^2\oplus r^2g_0$ and, by quasi-isometry,
\begin{gather*}
    \sigma_i^N(A,g)\geq\frac{\sigma_i^N(A,\Tilde{g})}{(1+\varepsilon_0)^{m+1/2}}\,,\\
    \sigma_{i+1}^D(A,g)\leq\sigma_{i+1}^D(A,\Tilde{g})(1+\varepsilon_0)^{m+1/2}\,,
\end{gather*}
where $\sigma_i^N(A,\Tilde{g})$ and $\sigma_{i+1}^D(A,\Tilde{g})$ are the eigenvalues of the problems
\[\begin{cases}
u_{rr}+(m-1)r^{-1}u_r+r^{-2}\Delta_{\s^{m-1}}u=0 &\mbox{ in $A$}\,,\\
\partial_nu=0&\mbox{ on $\partial B_\delta(p)$}\,,\\
-\partial_n u=\sigma^Nu &\mbox{ on $\partial B_\varepsilon(p)$}\,,
\end{cases}\]
\[\begin{cases}
u_{rr}+(m-1)r^{-1}u_r+r^{-2}\Delta_{\s^{m-1}}u=0 &\mbox{ in $A$}\,,\\
u=0&\mbox{ on $\partial B_\delta(p)$}\,,\\
-\partial_n u=\sigma^Du &\mbox{ on $\partial B_\varepsilon(p)$}\,.
\end{cases}\]
By quasi-isometry, we also have
\[\frac{\varepsilon\omega_{m-1}^{1/(m-1)}}{(1+\varepsilon_0)^{(m-1)/2}}\leq |\partial \Omega_\varepsilon|^{1/(m-1)}\leq (1+\varepsilon_0)^{(m-1)/2}\varepsilon\omega_{m-1}^{1/(m-1)}\,.\]

By separation of variables, let us find the harmonic functions on $\Omega_\varepsilon$. Suppose $u(r,p)=F(r)G(p)$. Then
\[u_{rr}+(m-1)r^{-1}u_r+r^{-2}\Delta_{\s^{m-1}}u=0\]
implies that
\[\frac{r^2F''+r(m-1)F'}{F}=\frac{-\Delta_{\s^{m-1}}G}{G}=\lambda\,.\]
The equation $-\Delta_{\s^{m-1}}G=\lambda G$ gives us the solutions $\lambda_{(k)}=k(k+m-2)$, with the associated eigenfunction $G_k$ which is a spherical harmonic of degree $k$. 
The multiplicity of $\lambda_{(k)}$ is also $m_k$.

Then, we solve $r^2F''+r(m-1)F'-k(k+m-2)F=0$ for all $k\geq0$.
We find the different solutions:
\begin{gather*}
    a+b\log r \mbox{ , when $k=0$ and $m=2$}\,,\\
    a+br^{2-m} \mbox{ , when $k=0$ and $m\neq2$}\,,\\
    ar^k+br^{2-m-k} \mbox{ , otherwise}\,.
\end{gather*}
For the Steklov-Neumann problem, we find the following eigenvalues for $k\geq1$:
\[\sigma_k^N(A,\Tilde{g})=\frac{k(m+k-2)(1-\delta^{2-m-2k}\varepsilon^{m+2k-2})}{\varepsilon(k+(m+k-2)\delta^{2-m-2k}\varepsilon^{m+2k-2})}\,.\]
For the Steklov-Dirichlet problem, we find the following eigenvalues for $k\geq1$:
\[\sigma_{k+1}^D(A,\Tilde{g})=\frac{(m+k-2)+k\delta^{2-m-2k}\varepsilon^{m+2k-2}}{\varepsilon(1-\delta^{2-m-2k}\varepsilon^{m+2k-2})}\,.\]
The multiplicity of these eigenvalues is also $m_k$.

Thus, if $j=0$, set $k=0$ and for $j>0$, choose the unique $k>0$ such that $m_0+\cdots+m_{k-1}\leq j< m_0+\cdots+m_{k-1}+m_k$. In this case, we have
\begin{equation*}
    \frac{(m+k-2)\omega_{m-1}^{1/(m-1)}}{(1+\varepsilon_0)^{(2m^2-1)/(2m-2)}}\leq\lim\limits_{\varepsilon\to0}\sigma_j(\Omega_\varepsilon)|\partial \Omega_\varepsilon|^{1/(m-1)}\\
    \leq(1+\varepsilon_0)^{(2m^2-1)/(2m-2)}(m+k-2)\omega_{m-1}^{1/(m-1)} \,.
\end{equation*}
Since it is true for every $\varepsilon_0$, we take the limit as $\varepsilon_0\to0$. This concludes the proof.

\end{proof}

\section{Acknowledgements}\label{section:ack}

The author would like to thank Bruno Colbois and Jean Lagacé for reading an early version of the article and Léonard Tschanz and Bruno Colbois for helping with the multiplicity in the proof of Theorem \ref{thm:pointIntro}.
The author is supported by NSERC. This work is a part of the PhD thesis of the author under the supervision of Alexandre Girouard.

\bibliographystyle{plain}
\bibliography{bibliography}

\end{document}